\title{The Pell tower and Ostronometry}
\author{Robbert Fokkink}
\abstract{%
    Conway and Ryba considered a table of bi-infinite Fibonacci sequences and discovered new interesting patterns. We extend their considerations to tables that are defined by the recurrence $X_{n+1}=dX_n+X_{n-1}$ for natural numbers $d$. In our search for new patterns we run into a Red Wall and exotic numeration systems.
    }
\keywords{Fibonacci numbers, linear recurrence, Wythoff array.}
\begin{document}

\tableofcontents\bigskip\bigskip

The Fibonacci sequence $(F_n)=0,1,1,2,3,5,8,13,21,34,\ldots$ and its companion,
the Lucas sequence $(L_n)=2,1,3,4,7,11,18,29,47,\ldots$ 
have been examined inside out. 
A few years ago, John Conway and Alex Ryba managed
to find a surprising new shape in the Fibonacci recurrence $X_{n+1}=X_n+X_{n-1}$, which they called 
the \emph{Empire State Building}. We will explain what that is in Section \ref{section1}. In this paper we adopt their point of view 
and look for similar shapes in the 
recurrence $X_{n+1}=dX_{n}+X_{n-1}$ for general~$d$.
We call them Pell Towers, since the linear recurrence for $d=2$ produces the Pell numbers. The analysis of Conway and Ryba depends on relations between Fibonacci numbers which they called Fibonometry, since they can be derived from trigonometric relations. We extend this to the recursion 
$X_{n+1}=dX_n+X_{n-1}$ and call it Ostronometry.

\section{The Wythoff array and the Empire State building}\label{section1}

The starting point of Conway and Ryba in \cite{conwayryba} is Table~\ref{table:1} 
of Fibonacci recurrent sequences. 
\begin{table}[ht!]
\tiny{
\[
\begin{NiceArray}{cccccccccccc}
0&1&1&2&3&5&8&13&21&34& \cdots&  \\
1&3&4&7&11&18&29&47&76&123& \cdots  \\
2&4&6&10&16&26&42&68&110&178& \cdots \\
3&6&9&15&24&39&63&102&165&267& \cdots \\
4&8&12&20&32&52&84&136&220&356& \cdots \\
5&9&14&23&37&60&97&157&254&411& \cdots \\
6&11&17&28&45&73&118&191&309&500& \cdots \\
7&12&19&31&50&81&131&212&343&555& \cdots \\
8&	14&	22&	36&	58&	94&	152&	246&	398&	644& \cdots \\	
9&	16&	25&	41&	66&	107&	173&	280&	453&	733& \cdots \\
 \vdots & \vdots & \vdots & \vdots & \vdots & \vdots & \vdots & \vdots & \vdots & \vdots & \ddots 
 \CodeAfter
  \tikz \draw [line width=0.6 mm] (1-|1) -- (1-|3) |- (12-|3);
\end{NiceArray}
\]}
\caption{\small{The first ten rows of the Garden State. If we ignore the first two columns (called the
seed and the wall by Conway and Ryba), then
we get the Wythoff array in which
each natural number occurs exactly once.}}\label{table:1}
\end{table}

They call it the Garden State, since the
table grows like a garden of numbers from two initial columns.
In this section we review their results.

The table had been encountered before, without the initial two columns. 
Morrison ~\cite{morrison} found it first, modifying an earlier array of Stolarsky~\cite{stolarsky}.
He proved that each Fibonacci recurrent sequence with positive terms
occurs exactly once in the array (after deleting or adding
some initial terms to Stolarsky's array).
He also proved that the
rows consist of losing positions in Wythoff's game, which is why he called it
the Wythoff array. The study of this game and its associated numeration systems
is a world of its own~\cite{duchene}.

Kimberling~\cite{kimberling} found a neat connection between the Wythoff array and Zeckendorf numeration.
In this numeration system, each natural number $N$
is written as a sum of non-consecutive Fibonacci numbers
\begin{equation}\label{eq:Z}
N=\sum_{2\leq j\leq i} d_jF_j
\end{equation}
with $F_i$ the largest Fibonacci number bounded by $N$ and $d_j\in\{0,1\}$ and $d_{j-1}=0$ if $d_j=1$.
Each number can be represented by a binary word $d_2d_3\cdots d_i$, starting with the digit of $F_2$.
For instance, the binary word of $15$ is $010001$ in lsd representation.
This is the least significant digit first, or lsd representation of $N$.
Note that it runs in the opposite direction that we are all used to in our decimal
notation (the msd representation).
The expansion of Equation~\eqref{eq:Z}
can be found by repeatedly subtracting the largest possible Fibonacci number.
This process produces the digits in msd order.

If we arrange the words that start and end with a~$1$ in increasing length, and within constant length in increasing lexicographic order, then we get
\begin{equation}\label{eq:radix}
1, 101, 1001, 10001, 10101, 100001, 101001, 100101,\ldots.
\end{equation} 
This is known as the \emph{radix order}.
These words represent the initial column of the Wythoff array, next
to the wall in Table~\ref{table:1}.
The second column has prefix~$01$,
the third have prefix~$001$, and so forth.
To locate a number in the array, simply determine its Zeckendorf expansion.
The prefix of zeros determines the column. The suffix determines the row.

Conway and Ryba extended the Wythoff array by two columns, called
the \emph{seed} and the \emph{wall}. 
The seed $0,1,2,3,\ldots$ numbers the floors of the
Empire State Building, counting from the top. 
A seed term and a wall term determine the state and the recurrent sequence
grows from there.
Conway and Ryba introduced the operation $n\mapsto\mathtt{out}(n)$, which
prepends a zero. 
If $n$ is represented by $w$ then $\mathtt{out}(n)$ 
is represented by $0w$.
In the Wythoff array $\mathtt{out}$ takes a step to the right.
It has a hiccup for the extended array with the seed and the wall. 
If we apply $\mathtt{out}$ to the seed, then we get $0, 2, 3, 5, 7, 8, \ldots$
which is the wall minus one.
If we apply $\mathtt{out}$ to the wall,
then we get the sequence $2,5,7,10,13,15,\ldots$ which is the
first column of the Wythoff array plus one. From then
on, the operation runs smoothly.
\begin{table}[ht!]
\tiny{
\begin{center}
\[ 
\begin{NiceArray}{ccccccccccc}
\cdots &13&-8&5&-3&2&-1&1&0&1&\\
 \cdots &18&-11&7&-4&3&-1&2&1&3\\
 \cdots &10&-6&4&-2&2&0&2&2&4\\
 \cdots &15&-9&6&-3&3&0&3&3&6\\
 \cdots &20&-12&8&-4&4&0&4&4&8\\
 \cdots &12&-7&5&-2&3&1&4&5&9\\
 \cdots &17&-10&7&-3&4&1&5&6&11\\
 \cdots &9&-5&4&-1&3&2&5&7&12\\
 \cdots &14	&-8	&6	&-2	&4	&2	&6	&8	&14\\
 \cdots &19	&-11	&8	&-3	&5	&2	&7	&9	&16\\
 \iddots & \vdots & \vdots & \vdots & \vdots & \vdots & \vdots & \vdots & \vdots & \vdots 
  \CodeAfter 
\tikz \draw [line width=0.6 mm] (12-|11) |- (1-|11) -- (1-|8) |- (2-|6) |- (3-|4) |- (6-|2) |- (12-|2);
 \end{NiceArray}
 \]
\caption{\small{Extending the recurrence to the left. The wall reappears, with positive terms, and so does the seed in front of it, with negative terms. The order is shuffled. For instance, the seed 5 (sixth row) appears two rows down (eighth row) next to the left wall term as -5. It is a longstanding open problem whether there is an algorithm to decide if a bi-infinite linear recurrence sequence has only non-negative terms, see~\cite{kenison}.}}\label{table:2}
\end{center}}
\end{table}

The Fibonacci recurrence extends to negative indices.
Conway and Ryba consider the extension of Table~\ref{table:1}
to the left and call it the \emph{ExtraFib} array.
The numbers that appear here are alternating in sign.
If we take absolute values $Y=|X|$, 
then we retrieve the Fibonacci recurrence $Y_{-n-1}=Y_{-n}+Y_{-n+1}$,
which now grows to the left.  
Each sequence of the Wythoff array reappears on the left with alternating
signs, possibly at a different level.
If it reappears at the same level, then the bi-infinite
sequence is \emph{palindromic} (ignoring the signs).
Since sequences reappear at the left there is another wall (and another seed) on the left,
see Table~\ref{table:2}.
The left wall extends ever further to the left.

Conway and Ryba prove that 
the number of terms between the walls is odd, which is why it is natural
to center the table around the middle term (the pillar). 
The resulting figure in Table~\ref{table:3}
has the outline of a skyscraper, made up 
of blocks of constant width (number of terms between the walls)
which get longer and longer as we go down the table.
The palindromic sequences are evenly spaced within each block. 
By underlining these rows within the walls, the structure gets even more likeliness to the 
\emph{Empire State Building}.
Within a block, the palindromes are either
multiples of the Fibonacci numbers or of the Lucas numbers. That is why Conway and Ryba
call them Fifi blocks and Lulu blocks. 
They show in~\cite{conwayryba} that the Empire State Building has lots of structure.
What types of buildings do we get for other recursions?
We consider this question for the recursion $X_{n+1}=dX_n+X_{n-1}$.
\begin{table}[ht!]
\tiny{
\[ 
{
\begin{NiceArray}{rrrrrrr||r||rrrrrrr}[columns-width=auto]
\cdots &-8&5&-3&2&-1&1&0&1&1&2&3&5&8&\cdots\\
 \cdots &18&-11&7&-4&3&-1&2&1&3&4&7&11&18&\cdots\\
 \cdots &-16&10&-6&4&-2&2&0&2&2&4&6&10&16&\cdots\\
 \cdots &-24&15&-9&6&-3&3&0&3&3&6&9&15&24&\cdots\\
 \cdots &-32&20&-12&8&-4&4&0&4&4&8&12&20&32&\cdots\\
 \cdots &31&-19&12&-7&5&-2&3&1&4&5&9&14&23&\cdots\\
 \cdots &44&-27&17&-10&7&-3&4&1&5&6&11&17&28&\cdots\\
 \cdots &23&-14&9&-5&4&-1&3&2&5&7&12&19&31&\cdots\\
 \cdots &36&-22&14	&-8	&6	&-2	&4	&2	&6	&8	&14&22&36&\cdots\\
 \cdots &49&-30&19	&-11	&8	&-3	&5	&2	&7	&9	&16&25&41&\cdots\\
 \cdots&28&	-17&	11&	-6&	5&	-1&	4&	3&	7&	10&	17&	27&	44&\cdots\\
 \cdots&41&-25&16&-9&7&-2&5&3&8&11&19&30&49&\cdots\\
 \cdots&54&-33&21&-12&9&-3&6&3&9&12&21&33&54&\cdots\\
\cdots&-53&33&-20&13&-7&6&-1&5&4&9&13&22&35&\cdots\\
\cdots&-74&46&-28&18&-10&8&-2&6&4&10&14&24&38&\cdots\\
\cdots&-40&25&-15&10&-5&5&0&5&5&10&15&25&40&\cdots\\
\cdots&-61&38&-23&15&-8&7&-1&6&5&11&16&27&43&\cdots\\
\cdots&-82&51&-31&20&-11&9&-2&7&5&12&17&29&46&\cdots\\
\cdots&-48&30&-18&12&-6&6&0&6&6&12&18&30&48&\cdots\\
\cdots&-69&43&-26&17&-9&8&-1&7&6&13&19&32&51&\cdots\\
\cdots&-35&22&-13&9&-4&5&1&6&7&13&20&33&53&\cdots\\
\cdots&-56&35&-21&14&-7&7&0&7&7&14&21&35&56&\cdots\\
\cdots&-77&48&-29&19&-10&9&-1&8&7&15&22&37&59&\cdots\\
\cdots&-43&27&-16&11&-5&6&1&7&8&15&23&38&61&\cdots\\
\cdots&-64&40&-24&16&-8&8&0&8&8&16&24&40&64&\cdots\\
\cdots&-85&53&-32&21&-11&10&-1&9&8&17&25&42&67&\cdots\\
\cdots&-51&32&-19&13&-6&7&1&8&9&17&26&43&69&\cdots\\
\cdots&-72&45&-27&18&-9&9&0&9&9&18&27&45&72&\cdots\\
\cdots&-38&24&-14&10&-4&6&2&8&10&18&28&46&74&\cdots\\
\cdots&-59&37&-22&15&-7&8&1&9&10&19&29&48&77&\cdots\\
\cdots&-80&50&-30&20&-10&10&0&10&10&20&30&50&80&\cdots\\
\cdots&-46&29&-17&12&-5&7&2&9&11&20&31&51&82&\cdots\\
\cdots&-67&42&-25&17&-8&9&1&10&11&21&32&53&85&\cdots\\
\cdots&-88&55&-33&22&-11&11&0&11&11&22&33&55&88&\cdots\\

 \iddots & \vdots & \vdots & \vdots & \vdots & \vdots & \vdots & \vdots  & \vdots & \vdots & \vdots & \vdots & \vdots &\vdots &\ddots
  \CodeAfter 
\tikz \draw [line width=0.6 mm](36-|15)|-(35-|14)-|(14-|14)-|(6-|13) -| (3-|12) -| (2-|11) -| (1-|10) -- (1-|7) |- (2-|6) |- (3-|5) |- (6-|4) |- (14-|3) |-(35-|3) -|(36-|2);
\tikz \draw [line width=0.1 mm] (2-|6) -- (2-|10);
\tikz \draw [line width=0.1 mm] (3-|5) -- (3-|11);
\tikz \draw [line width=0.1 mm] (4-|5) -- (4-|12);
\tikz \draw [line width=0.1 mm] (5-|5) -- (5-|12);
\tikz \draw [line width=0.1 mm] (6-|5) -- (6-|12);
\tikz \draw [line width=0.1 mm] (10-|4) -- (10-|13);
\tikz \draw [line width=0.1 mm] (14-|4) -- (14-|13);
\tikz \draw [line width=0.1 mm] (17-|3) -- (17-|14);
\tikz \draw [line width=0.1 mm] (20-|3) -- (20-|14);
\tikz \draw [line width=0.1 mm] (23-|3) -- (23-|14);
\tikz \draw [line width=0.1 mm] (26-|3) -- (26-|14);
\tikz \draw [line width=0.1 mm] (29-|3) -- (29-|14);
\tikz \draw [line width=0.1 mm] (32-|3) -- (32-|14);
\tikz \draw [line width=0.1 mm] (35-|3) -- (35-|14);
\tikz \draw [red] [line width=0.6 mm] (1-|8) |- (2-|7);
\tikz \draw [red] [line width=0.6 mm] (2-|7) |- (4-|6);
\tikz \draw [red] [line width=0.6 mm] (4-|6) |- (6-|6);
\tikz \draw [red] [line width=0.6 mm] (6-|6) |- (9-|5);
\tikz \draw [red] [line width=0.6 mm] (9-|5) |- (22-|4);
\tikz \draw [red] [line width=0.6 mm] (22-|4) |- (36-|4);
 \end{NiceArray}}
 \]
\caption{\small{The top five blocks of the Empire State. A larger figure with
more details is given in~\cite{conwayryba}}.  The additional
red decoration is defined in the next section. Every non-zero integer appears
once to the left of the red wall. 
Every number between the red wall and the left wall 
appears with an opposite sign to the left of the left wall.}\label{table:3}}
\end{table}

\section{Ostrowski arrays and the Pell Tower}

The array $A_{m,n}$ can be defined for  
any recursion $X_{n+1}=dX_n+X_{n-1}$ and $d\geq 1$.
We call it an \emph{Ostrowski array}.
For $d=1$ the Ostrowski array is the Wythoff array.
The array extends to the left and we shall see that
it contains a building.
For $d=2$ the 
recursion produces the Pell numbers and that is
why we call the building a \emph{Pell tower}.

We limit our attention to $d>1$, since $d=1$
was fully covered in~\cite{conwayryba}.
Starting from $0,1$ the recursion $X_{n+1}=dX_n+X_{n-1}$
produces a sequence that forms the backbone of a numeration system.
If $d=1$ we get the Fibonacci numbers and Zeckendorf numeration.
If $d=2$ we
get the Pell numbers 
$(P_n)=0,1,2,5,12,29,70,169,\ldots$. 
More generally,
let $(D_n)=1,d,d^2+1,\ldots$ be the sequence for a fixed~$d$.
It is known~\cite[p. 106]{AS} that
every natural number $N$ can be represented uniquely in the form
\begin{equation}\label{eq:Os}
N=\sum_{1\leq j\leq i} d_jD_j
\end{equation}
where $D_i$ is the largest denominator less than $N$ and the digits $d_j$ satisfy
\begin{enumerate}
    \item $0\leq d_1<d$.
    \item $0\leq d_i\leq d $ for $i>1$.
    \item If $d_i=d$ then $d_{i-1}=0$.
\end{enumerate}
This numeration system is a particular case of the more general \emph{Ostrowski $\alpha$-numeration system},
which is defined from the continued fraction expansion (cfe) of~$\alpha>1$.
The denominators of the convergents in its cfe form the backbone of the numeration system.
Ostrowski numeration is particularly nice for quadratic irrationals~\cite{ostro}.
In our case we have $\alpha=\frac{d+\sqrt{d^2+4}}{2}$.
We say that the word $d_1d_2\cdots d_N$ in Equation~\eqref{eq:Os}
is an \emph{Ostrowski word}, without mentioning~$\alpha$.
Ostrowski words have letters $\{0,1,\ldots,d\}$ and
each $d$ is preceded by $0$.

We say that an Ostrowski word is \emph{trimmed} if it cannot be written as $0v$
for an Ostrowski word $v$.
The array $A_{m,n}$ for the Fibonacci
recursion has rows labelled by words in the radix order
of Equation~\eqref{eq:radix}. 
We order the Ostrowski array for the recursion $X_{n+1}=dX_n+X_{n-1}$ in the same way.
Its
rows correspond to trimmed Ostrowski words in radix order.
Any number $n$ can therefore be located from its Ostrowski representation.
This is the \emph{$d$-Ostrowski array}, 
but we shall often suppress $d$ in our notation.
The case $d=2$ is given in Table~\ref{table:4}.
\begin{table}[ht!]
\tiny{
\[
\begin{NiceArray}{cccccccccc}
0&1&2&5&12&29&70&169&408&\cdots\\
1&3&7&17&41&99&239&577&1393&\cdots\\
2&4&10&24&58&140&338&816&1970&\cdots\\
2&6&14&34&82&198&478&1154&2786&\cdots\\
3&8&19&46&111&268&647&1562&3771&\cdots\\
4&9&22&53&128&309&746&1801&4348&\cdots\\
4&11&26&63&152&367&886&2139&5164&\cdots\\
5&13&31&75&181&437&1055&2547&6149&\cdots\\
6&15&36&87&210&507&1224&2955&7134&\cdots\\
 \vdots & \vdots & \vdots & \vdots & \vdots & \vdots & \vdots & \vdots & \vdots & \ddots 
 \CodeAfter 
  \tikz \draw [line width=0.6 mm] (1-|1) -- (1-|2) |- (11-|2);
\end{NiceArray}
\]}
\caption{\small{The first ten rows of the Pell array
(the $2$-Ostrowski array), 
with an additional first column of wall terms. The table is seedless.
Within a column, there are only three differences. For instance, in
the second column the differences are $5,3,4,5,3,4,\ldots$} and in the
third column they are $12,7,10,12,7,10,\ldots$.}\label{table:4}
\end{table}

For a fixed $d$,
let $w_1,w_2,w_3,\ldots$ be the trimmed Ostrowski words in radix order, starting
from $w_1=1$.
Then $A_{m,n}$ is represented by $0^{n-1}w_m$.
The $\mathtt{out}$ operation moves from one column to the next
in the $d$-Ostrowski array.
It is defined in terms of words, but it is also possible
to give a numerical description, as in the lemma below.
Let $\beta=\frac{d-\sqrt{d^2+4}}{2}$
be the algebraic conjugate of~$\alpha$.
Note that $\alpha\beta=-1$ and $\alpha+\beta=d$ (also known as the norm and the
trace of $\alpha$).

\begin{lemma}\label{lem:1}
For every natural number $n$, $\mathtt{out}(n)=\lfloor \alpha n + \frac 1\alpha \rfloor$.
\end{lemma}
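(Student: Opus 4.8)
The plan is to convert the combinatorial definition of $\mathtt{out}$ into an exact arithmetic formula $\mathtt{out}(n)=\alpha n + E$ with a small error term $E$, and then show that $E$ always lies in the half-open window on which the floor function produces the claimed value.

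First I would record the numerical effect of prepending a zero. Writing the unique Ostrowski expansion $n=\sum_{j\ge 1}d_jD_j$, prepending a $0$ to the lsd word shifts every digit one place to the left; one checks that $0w$ still satisfies the digit conditions (1)--(3) (the new leading digit is $0$, and all other adjacencies are preserved), and that it represents $\mathtt{out}(n)=\sum_{j\ge 1}d_jD_{j+1}$. Thus everything reduces to comparing $D_{j+1}$ with $\alpha D_j$. Second, I would prove the identity $D_{j+1}=\alpha D_j+\beta^{\,j}$. Setting $r_n=D_{n+1}-\alpha D_n$, the recurrence together with $d-\alpha=\beta$ gives $r_n=\beta D_n+D_{n-1}$, while $\alpha\beta=-1$ gives $\beta r_{n-1}=\beta D_n+D_{n-1}$; hence $r_n=\beta r_{n-1}$, and since $r_0=D_1-\alpha D_0=1$ we obtain $r_n=\beta^{\,n}$. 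Substituting into the formula for $\mathtt{out}(n)$ yields
\[
\mathtt{out}(n)=\alpha n + E,\qquad E:=\sum_{j\ge 1}d_j\beta^{\,j}.
\]
Since $\mathtt{out}(n)$ is an integer and $\tfrac1\alpha=-\beta$, the lemma is equivalent to $0\le \tfrac1\alpha-E<1$, that is, to the two-sided bound $-\beta-1<E\le-\beta$.

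The content of the proof is bounding $E$ using the digit constraints and the fact that $-\beta\in(0,1)$. Writing $\beta^{\,j}=(-1)^j(-\beta)^j$, the even-indexed terms of $E$ are non-negative and the odd-indexed ones non-positive. For the upper bound, discarding the odd terms and using $d_j\le d$ gives $E\le \sum_{k\ge 1}d(-\beta)^{2k}=d\,\tfrac{\beta^{2}}{1-\beta^{2}}$; the minimal polynomial relation $\beta^{2}=d\beta+1$ (so $1-\beta^{2}=-d\beta$) collapses this to exactly $-\beta$, and finiteness of the word makes the inequality strict, so $E<-\beta$. For the lower bound, discarding the even terms and using $d_1\le d-1$ together with $d_j\le d$ gives $E\ge -(d-1)(-\beta)-\sum_{k\ge 1}d(-\beta)^{2k+1}$, which the same relation evaluates to $-\beta-1$, again strictly because the word is finite. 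Together these give $-\beta-1<E<-\beta$, which is exactly the required window, and the floor identity follows.

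The main obstacle is this last step: one must select the right \emph{extremal} digit patterns --- all $d$'s on the even positions for the maximum, and $d-1$ in position $1$ with $d$'s on the odd positions for the minimum --- and recognise that the resulting geometric series collapse, via $\beta^{2}=d\beta+1$, precisely to the endpoints $-\beta$ and $-\beta-1$. By contrast, the telescoping computation of $r_n=\beta^{\,n}$ and the verification that $0w$ remains an admissible Ostrowski word are routine.
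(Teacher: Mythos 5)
Your proposal is correct and follows essentially the same route as the paper: both reduce the claim to the identity $\mathtt{out}(n)-\alpha n=\sum_j d_j\beta^j$ via $D_{j+1}-\alpha D_j=\beta^j$, and then trap this error sum in the window $\left(\frac1\alpha-1,\frac1\alpha\right)$ using the extremal digit patterns ($d_1\le d-1$, $d_j\le d$) and the collapse of the geometric series through $\beta^2=d\beta+1$. The only cosmetic difference is that you derive $D_{j+1}-\alpha D_j=\beta^j$ by telescoping the recurrence, whereas the paper reads it off from the Binet formula $D_n=\frac{\alpha^n-\beta^n}{\alpha-\beta}$.
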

\begin{proof}
    The denominators satisfy 
    \begin{equation}\label{eq:binet}
    D_n=\frac{\alpha^n-\beta^n}{\alpha-\beta}
    \end{equation}
    from which $D_{n+1}-\alpha D_n=\beta^n$ follows. If $n=\sum_{1\leq j\leq i} d_jD_j$
    then 
    \begin{equation}\label{eq:1}
    \mathtt{out}(n)-\alpha n=\sum_{1\leq j\leq i} d_j\beta^j.    
    \end{equation}
    Even powers of $\beta$ are positive, odd
    powers are negative. The first digit is bounded by $d-1$ since $d$ needs to be preceded by $0$.
    It follows that
    \[
    d \sum_{k=1}^\infty \beta^{2k-1} - \beta < \mathtt{out}(n)-\alpha n < d \sum_{k=1}^\infty \beta^{2k},   
    \]
    which is equal to 
    \[
    \frac {d\beta}{1-\beta^2} - \beta < \mathtt{out}(n)-\alpha n < \frac {d\beta^2}{1-\beta^2}.   
    \]
    Now $\beta^2=d\beta + 1$ and $\beta = -\frac 1\alpha$. Therefore, $\mathtt{out}(n)$ is the unique
    integer in the interval $(\alpha n + \frac 1\alpha -1, \alpha n+\frac 1 \alpha)$. In other words,
    \begin{equation}\label{eq:interval}
        \mathtt{out}(n)-\alpha n\in\left(1-\frac 1\alpha,\frac 1\alpha  \right),
    \end{equation}
    and the proof is finished.
\end{proof}

An inspection of Table~\ref{table:4} shows that
if we move to the right along a fixed row, then the ratio of consecutive
numbers converges to $\alpha$. The following corollary makes this precise.

\begin{corollary}\label{cor:3}
    For a fixed $m$ and running index $n$, the differences $A_{m,n+1}-\alpha A_{m,n}$ have alternating signs and diminish in absolute value by a factor $\frac 1\alpha$.
\end{corollary}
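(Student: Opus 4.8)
The plan is to reduce everything to the identity $D_{\ell+1}-\alpha D_\ell=\beta^\ell$ that was already extracted from Binet's formula~\eqref{eq:binet} in the proof of Lemma~\ref{lem:1}. The crucial dictionary is that moving one step to the right along a row is precisely the $\mathtt{out}$ operation: the entry $A_{m,n}$ is represented by $0^{n-1}w_m$ and $A_{m,n+1}$ by $0^{n}w_m$. Writing the trimmed word in lsd order as $w_m=c_1c_2\cdots c_k$, the entry $A_{m,n}$ is the Ostrowski number $\sum_{j=1}^k c_j D_{n-1+j}$, so the running difference will telescope cleanly.

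First I would compute, using $D_{n+j}-\alpha D_{n-1+j}=\beta^{\,n-1+j}$,
\[
A_{m,n+1}-\alpha A_{m,n}=\sum_{j=1}^k c_j\bigl(D_{n+j}-\alpha D_{n-1+j}\bigr)=\sum_{j=1}^k c_j\beta^{\,n-1+j}=\beta^{\,n-1}S_m,\qquad S_m:=\sum_{j=1}^k c_j\beta^{\,j},
\]
where $S_m$ depends only on the row index $m$. Since $\alpha\beta=-1$ and $\alpha>1$, we have $\beta=-1/\alpha\in(-1,0)$; hence the factor $\beta^{\,n-1}$ reverses sign at every step, while its modulus is multiplied by $|\beta|=1/\alpha$ each time $n$ increases by one. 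This delivers at once both the alternation of signs and the claimed decay by the factor $1/\alpha$ — provided $S_m\neq 0$, which is exactly what makes the alternation strict.

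The single point requiring care is therefore the non-vanishing of $S_m$. I would handle it by recognizing $S_m$ as the special case $n=1$ of the displayed identity, namely $S_m=A_{m,2}-\alpha A_{m,1}=\mathtt{out}(A_{m,1})-\alpha A_{m,1}$ via~\eqref{eq:1}. Every row entry $A_{m,1}$ is a positive integer (the smallest trimmed word is $w_1=1$, representing $D_1=1$), and $\alpha=\frac{d+\sqrt{d^2+4}}{2}$ is a quadratic irrational for integer $d$; hence $\alpha A_{m,1}$ is irrational, whereas $\mathtt{out}(A_{m,1})$ is an integer by Lemma~\ref{lem:1}. Thus $S_m$ is irrational and in particular nonzero, so the signs of $A_{m,n+1}-\alpha A_{m,n}$ genuinely alternate. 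I expect this irrationality argument to be the only real obstacle; once the identification of $\mathtt{out}$ with the column shift is in place, the telescoping computation is routine.
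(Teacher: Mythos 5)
Your proposal is correct and takes essentially the same route as the paper: the paper's proof likewise invokes Equation~\eqref{eq:1} to write $A_{m,n+1}-\alpha A_{m,n}=\beta^{n-1}\sum_{j}d_j\beta^{j}$ and concludes from $\beta=-1/\alpha$ that each step flips the sign and shrinks the modulus by $1/\alpha$. The only difference is that you additionally verify the row-dependent constant $S_m$ is nonzero (via irrationality of $\alpha$, since $S_m=\mathtt{out}(A_{m,1})-\alpha A_{m,1}$ with both terms coming from integers), a point the paper's proof leaves implicit; this is a genuine, if small, improvement in rigor rather than a different method.
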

\begin{proof}
    According to Equation~\eqref{eq:1}
    \[A_{m,n+1}-\alpha A_{m,n}=\beta^{n-1}\sum_{i=1}^kd_i\beta^k,\]
    if $w=d_1\cdots d_k$ is the $m$-th trimmed word.
    Therefore the next difference 
    $A_{m,n+2}-\alpha A_{m,n+1}$ diminishes by a factor $\beta=-1/\alpha$.
\end{proof}

We shall say that two sequences $(X_n)$ and $(Y_n)$ are
\emph{tail equivalent} if there exists an integer $j$ such
that $X_n=Y_{n+j}$ for sufficiently large $n$.
Morrison defined a table $(A_{m,n})$ 
of Fibonacci recursive sequences to be a \emph{Stolarsky array} if it
contains every natural number once, and if each Fibonacci recurrent
sequence is tail equivalent to a row in the table. 
Extending this to our recursion,
we say that the table is a $d$-Stolarsky array
if satisfies the following properties:
\begin{enumerate}
    \item Each row satisfies the recurrence $X_{n+1}=dX_n + X_{n-1}$.\label{p1}
    \item Each natural number occurs once in the table.\label{p2}
    \item For every positive recurrent sequence $(B_n)$ there exists an $m$
    such that $(A_{m,n})$ and $(B_n)$ are tail equivalent.\label{p3}
\end{enumerate}
Morrison proved that the Wythoff array is a Stolarsky array~\cite{morrison}.
His result extends to Ostrowski arrays.
\begin{theorem}\label{thm:1}
    The $d$-Ostrowski array is a $d$-Stolarsky array.
\end{theorem}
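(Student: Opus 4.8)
The plan is to verify the three defining properties of a $d$-Stolarsky array in turn, using the Ostrowski numeration system as the bookkeeping device throughout. Property~\ref{p1} is essentially immediate: each row is represented by the words $0^{n-1}w_m$ as $n$ increases, and prepending zeros corresponds to the shift $D_j\mapsto D_{j+1}$ in the denominators, which satisfy the recurrence $D_{n+1}=dD_n+D_{n-1}$ by definition. Summing the representation $A_{m,n}=\sum_j d_j D_{j+n-1}$ against this recurrence shows each row inherits it. Property~\ref{p2} is the combinatorial heart of the construction. Every natural number $N$ has a \emph{unique} Ostrowski representation by the cited result from~\cite{AS}, and its trimmed form $w_m$ together with the number of leading zeros $n-1$ determines a unique pair $(m,n)$. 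Conversely every cell carries a distinct word $0^{n-1}w_m$, and distinct words give distinct integers by uniqueness of the expansion. So the map between cells and positive integers (and $0$, placed in the seed column) is a bijection, giving property~\ref{p2}.

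The substantive work is property~\ref{p3}, tail equivalence of an arbitrary positive recurrent sequence to a row. First I would observe that any sequence $(B_n)$ satisfying $X_{n+1}=dX_n+X_{n-1}$ with infinitely many positive terms is eventually strictly increasing and eventually all positive, since the recurrence forces positive terms to grow, and a dominant-root argument using $\alpha>1$ shows $B_n\sim c\,\alpha^n$ for a constant $c>0$. Thus after discarding finitely many initial terms we may assume $(B_n)$ is a sequence of positive integers with $B_{n+1}=dB_n+B_{n-1}$. The key is then Lemma~\ref{lem:1} and Corollary~\ref{cor:3}: the Ostrowski array rows are characterised among all positive recurrent sequences by the property that $\mathtt{out}$ advances one column, equivalently that each entry satisfies $A_{m,n+1}=\lfloor\alpha A_{m,n}+\tfrac1\alpha\rfloor$. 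I would show that any positive recurrent sequence eventually obeys this same floor relation, so that its tail coincides with the tail of some row.

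Concretely, the plan is to start from a sufficiently large term $B_n$ of the (now positive, increasing) sequence, compute its Ostrowski word, read off the corresponding cell $A_{m,n'}$, and prove $B_{n+k}=A_{m,n'+k}$ for all $k\geq 0$. Both sides satisfy the same second-order recurrence, so it suffices to match two consecutive terms. For this I would use Corollary~\ref{cor:3}: the quantity $B_{n+1}-\alpha B_n$ is determined, up to a geometrically shrinking error, by the tail behaviour, and I would argue that for large $n$ this difference lands in the interval $(1-\tfrac1\alpha,\tfrac1\alpha)$ governing $\mathtt{out}$ in Equation~\eqref{eq:interval}, forcing $B_{n+1}=\lfloor\alpha B_n+\tfrac1\alpha\rfloor=\mathtt{out}(B_n)$. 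Iterating, consecutive terms of the tail of $(B_n)$ are linked by $\mathtt{out}$ exactly as the entries of a single row are, which pins $(B_n)$ to that row up to a shift.

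The main obstacle I anticipate is the last step: showing that the defining interval condition of Equation~\eqref{eq:interval} is genuinely satisfied by the tail of an \emph{arbitrary} positive recurrent sequence, and not merely by sequences already known to be rows. The danger is a boundary sequence whose differences $B_{n+1}-\alpha B_n$ accumulate at an endpoint of the interval $(1-\tfrac1\alpha,\tfrac1\alpha)$, where the floor function could jump and break the identification. I would handle this by a careful growth estimate: writing $B_n=c\alpha^n+c'\beta^n$ with $c>0$ and $|\beta|<1$, the difference $B_{n+1}-\alpha B_n=c'(\beta-\alpha)\beta^n$ decays geometrically to $0$, so for large $n$ it lies strictly inside the open interval, eliminating the boundary case and completing the tail-equivalence argument.
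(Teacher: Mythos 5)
Your proposal is correct and takes essentially the same route as the paper: properties \ref{p1} and \ref{p2} read off from the uniqueness of Ostrowski representations, and property \ref{p3} proved by writing $B_n=c\alpha^n+c'\beta^n$ with $c>0$, observing that $B_{n+1}-\alpha B_n$ decays geometrically to $0$, so that for large $n$ the interval characterization of $\mathtt{out}$ from Lemma~\ref{lem:1} forces $B_{n+1}=\mathtt{out}(B_n)$, after which two matching consecutive terms pin the tail of $(B_n)$ to a row. One shared caveat: the interval in Equation~\eqref{eq:interval} should read $\left(\frac{1}{\alpha}-1,\frac{1}{\alpha}\right)$ (as the paper itself uses later in Corollary~\ref{cor:out}), since the printed version $\left(1-\frac{1}{\alpha},\frac{1}{\alpha}\right)$ is empty for $d\geq 2$ and does not contain $0$, which your limiting argument needs.
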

\begin{proof}
Properties \ref{p1} and \ref{p2} follow immediately from the definition of the $d$-Ostrowski array.
There exists $a>0$ and $b$ such that $X_n=a\alpha^n + b\beta^n$. For every $\epsilon>0$
there exists an $k$ such that $|X_{k+1}-\alpha X_k|<\epsilon$. In particular,
we may choose $\epsilon=-\beta$. By Property \ref{p2} there exists an $A_{m,n}$ that is
equal to $X_k$ and by Lemma~\ref{lem:1} $A_{m,n+1}=X_{k+1}$. Therefore, $A_{m,n}$ 
and $X_n$ are tail equivalent.
\end{proof}

The irrational number $\alpha>1$ and a real number $\gamma>1-\alpha$ generate the \emph{non-homogeneous Beatty sequence}
\[\mathcal B_{\alpha,\gamma}=\left\{\lfloor \alpha+\gamma\rfloor, \lfloor 2\alpha+\gamma\rfloor, \lfloor 3\alpha+\gamma\rfloor, \ldots\right\}.\]
If $\overline\alpha=\alpha/(\alpha-1)$ and $\delta$ is real then 
$\mathcal B_{\overline \alpha,\delta}$ is complementary 
to $B_{\alpha,\gamma}$, as a subset of~$\mathbb N$, if
\[
\frac{\gamma}{\alpha}+\frac{\delta}{\overline\alpha}=0,
\] provided that $\alpha>2$ and none of the $n\alpha+\gamma$ are integral,
see~\cite{fraenkel}.

\begin{corollary}\label{cor:1}
    The first column $A_{1,n}$ of the Ostrowski array is the
    non-homogeneous Beatty sequence 
    \begin{equation}\label{eq:column1}
      \left\lfloor n\cdot\frac{\alpha}{\alpha-1}-\frac 1{\alpha(\alpha-1)}\right\rfloor.  
    \end{equation}
\end{corollary}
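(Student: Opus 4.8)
The plan is to identify the first column $A_{1,n}$ with the repeated application of $\mathtt{out}$ to the seed value, and then to invoke Lemma~\ref{lem:1} together with the complementary Beatty sequence machinery just introduced. First I would observe that the first column consists of the wall terms, and that by the word description $A_{1,n}$ is represented by $0^{n-1}w_1 = 0^{n-1}1$, i.e.\ it equals the denominator $D_n$ applied to the single floor index $n$. More usefully, since $A_{1,n}$ is exactly $\mathtt{out}^{\,n-1}$ applied to $A_{1,1}=n$ in the appropriate sense, I would try to pin down $A_{1,n}$ directly as a Beatty sequence by characterizing it as the complement of the set of numbers that do \emph{not} begin a row, and matching constants.

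The cleaner route, which I would pursue as the main line, is to show that the first column is complementary to the image of $\mathtt{out}$ on all of $\mathbb{N}$. Every number either has an Ostrowski word beginning with $0$ (in which case it is $\mathtt{out}(k)$ for some $k$, hence lies strictly inside the array and not in the first column) or begins with a nonzero digit (in which case it is a trimmed word and labels a first-column entry). Thus $A_{1,n}$ enumerates the complement of $\mathtt{out}(\mathbb N)=\{\lfloor \alpha k + \tfrac1\alpha\rfloor : k\geq 1\}$ by Lemma~\ref{lem:1}. The set $\{\lfloor \alpha k + \tfrac1\alpha\rfloor\}$ is precisely the non-homogeneous Beatty sequence $\mathcal B_{\alpha,\gamma}$ with $\gamma = \tfrac1\alpha$, so its complement is $\mathcal B_{\overline\alpha,\delta}$ with $\overline\alpha = \alpha/(\alpha-1)$ and $\delta$ determined by the balancing relation $\tfrac{\gamma}{\alpha} + \tfrac{\delta}{\overline\alpha} = 0$. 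Solving gives $\delta = -\tfrac1{\alpha^2}\cdot\tfrac{\overline\alpha}{1} = -\tfrac1{\alpha(\alpha-1)}$, which yields exactly the expression in Equation~\eqref{eq:column1}.

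The steps in order would therefore be: (i) verify that the first column is the complement in $\mathbb N$ of $\mathtt{out}(\mathbb N)$, using the trimmed-word characterization of rows; (ii) rewrite $\mathtt{out}(\mathbb N)$ as the Beatty sequence $\mathcal B_{\alpha, 1/\alpha}$ via Lemma~\ref{lem:1}; (iii) apply Fraenkel's complementation result to obtain the complementary sequence $\mathcal B_{\overline\alpha,\delta}$; and (iv) compute $\delta$ from the balancing equation and simplify to $-\tfrac1{\alpha(\alpha-1)}$.

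I expect the main obstacle to be the hypotheses of the complementation theorem rather than the algebra. Fraenkel's statement as quoted requires $\alpha > 2$ and that none of the $n\alpha + \gamma$ be integral; for $d>1$ we do have $\alpha = \tfrac{d+\sqrt{d^2+4}}{2} > 2$, so that condition is met, but I would need to check carefully that $\mathtt{out}(\mathbb N)$ is genuinely a homogeneous-\emph{enough} Beatty sequence with $\gamma = 1/\alpha$ and that the integrality exclusion holds (it does, since $\alpha$ is a quadratic irrational and $1/\alpha = -\beta$ is too, so $n\alpha + 1/\alpha$ is never an integer). The delicate bookkeeping is matching the off-by-one conventions: Lemma~\ref{lem:1} gives $\mathtt{out}(n) = \lfloor \alpha n + \tfrac1\alpha\rfloor$ with the floors landing in the half-open interval of Equation~\eqref{eq:interval}, and I must ensure the indexing of the complement aligns so that the constant comes out as $-\tfrac1{\alpha(\alpha-1)}$ and not shifted by one. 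Verifying that $\mathtt{out}$ is injective with the claimed image (so that the complement is enumerated in the correct order by the stated floor function) is where I would spend the most care.
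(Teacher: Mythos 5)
Your main line of argument---identifying the first column as the complement in $\mathbb N$ of $\mathtt{out}(\mathbb N)$, rewriting $\mathtt{out}(\mathbb N)$ as the Beatty sequence $\lfloor n\alpha + \tfrac1\alpha\rfloor$ via Lemma~\ref{lem:1}, and applying Fraenkel's complementation with $\gamma=\tfrac1\alpha$ to get $\delta=-\tfrac{\overline\alpha}{\alpha^2}=-\tfrac1{\alpha(\alpha-1)}$---is exactly the paper's proof, and your algebra checks out. Your extra verification of Fraenkel's hypotheses ($\alpha>2$ for $d>1$, non-integrality of $n\alpha+\tfrac1\alpha$) is a worthwhile addition the paper leaves implicit; the exploratory first paragraph about wall terms and $\mathtt{out}^{\,n-1}$ is dispensable and slightly confuses the first row with the first column, but you correctly abandon it.
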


\begin{proof}
By Lemma~\ref{lem:1} the numbers that can be written as $\mathtt{out}(n)$
form the non\--ho\-mo\-ge\-ne\-ous Beatty sequence $\lfloor n\alpha+\frac 1\alpha\rfloor$.
The first column  contains the numbers that cannot be written in this form,
i.e., the complementary Beatty sequence. It is equal to
$\mathcal B_{\overline\alpha,-{\overline\alpha}/\alpha^2}$.
\end{proof}

The first column of an Ostrowski array appears in the OEIS only for $d=1$, 
as the upper Wythoff sequence A001950. 
For $d=2$ the first column
does not occur in the OEIS, although it is very close to A081031, the positions of the white keys on a
piano keyboard, given by $\lfloor \frac {12 n - 3}7\rfloor$. The reason is that the fraction
$\frac {12}7$ is a convergent of $\frac{\alpha}{\alpha - 1}$.
The complementary Beatty sequence of the first column appears in the OEIS for
$d=1$ (lower Wythoff) and for $d=2$, sequence A082845.

We added a column $A_{m,0}$ of wall terms to the Ostrowski array.
If $w=jv$ represents $A_{m,1}$, then $v$ represents $A_{m,0}$.
It may not be an Ostrowski word.

\begin{corollary}\label{cor:2}
    For $d>1$ the sequence $A_{m,0}$ of wall terms is equal to
    $\lfloor \frac{m\alpha}{\alpha+1}\rfloor$.
\end{corollary}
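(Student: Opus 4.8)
The plan is to mirror the structure of Corollary~\ref{cor:1}: I would identify the wall column as a Beatty sequence by relating it to the $\mathtt{out}$ operation and the first data column, then pin down the exact floor formula by a careful analysis of the fractional parts.

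First I would recall that the wall entry $A_{m,0}$ and the first data entry $A_{m,1}$ satisfy the recurrence relation that generated the table, namely $A_{m,1}=\mathtt{out}(A_{m,0})$ applied in reverse along the row; more precisely, since each row obeys $X_{n+1}=dX_n+X_{n-1}$, the wall term is determined by $A_{m,0}=A_{m,2}-dA_{m,1}$, but it is cleaner to use the word description. If $w=jv$ represents $A_{m,1}$ with leading digit $j$, then $v$ represents $A_{m,0}$, so prepending the digit $j$ sends $A_{m,0}$ to $A_{m,1}$. The key numerical input is Lemma~\ref{lem:1} together with Corollary~\ref{cor:3}: the relation $A_{m,1}-\alpha A_{m,0}=\sum_i d_i\beta^{i}$ controls the offset, and I would show this offset lies in a half-open interval forcing the floor. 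Combining $A_{m,1}=\lfloor \alpha A_{m,0}+\tfrac1\alpha\rfloor$ with the identity $\alpha/(\alpha+1)=1-1/(\alpha+1)$ and $\alpha\beta=-1$, I expect the wall formula to drop out by solving for $A_{m,0}$ in terms of $m$.

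The more direct route, which I would actually carry out, is to count: the wall column lists, in increasing order of $m$, exactly those integers $k$ such that $k$ is a wall term, i.e. such that $\mathtt{out}(k)$ begins the $m$-th trimmed word. Since $m$ runs through all positive integers and the rows are indexed by trimmed Ostrowski words in radix order, the map $m\mapsto A_{m,0}$ is a strictly increasing enumeration of a complementary pair of Beatty sequences determined by $\alpha$. By Fraenkel's complementation criterion quoted above, with $\overline\alpha=\alpha/(\alpha-1)$, the wall sequence must be $\lfloor m\beta'+\gamma'\rfloor$ for the appropriate density; matching the density $\alpha/(\alpha+1)$ (which is the reciprocal frequency of wall terms among the integers, since the number of rows up to a given value grows like $(\alpha+1)/\alpha$ times the value) identifies the slope as $\alpha/(\alpha+1)$.

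The hard part will be nailing the constant term, i.e. verifying that the intercept is exactly $0$ so that the formula is the clean $\lfloor m\alpha/(\alpha+1)\rfloor$ rather than $\lfloor m\alpha/(\alpha+1)+c\rfloor$ for some small $c$. I would settle this by checking the boundary behaviour: for $m=1$ the first wall term is $A_{1,0}=0$ (the table is seedless, as noted for Table~\ref{table:4}), and $\lfloor \alpha/(\alpha+1)\rfloor=0$ since $0<\alpha/(\alpha+1)<1$; then I would confirm the increments $A_{m+1,0}-A_{m,0}$ take only the values consistent with a Beatty sequence of slope $\alpha/(\alpha+1)$, using that this slope lies in $(0,1)$ so successive floors differ by $0$ or $1$. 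The constraint $d>1$ enters precisely here: it guarantees $\alpha>2$, hence $\alpha/(\alpha+1)>1$ fails but the complementary-sequence hypothesis $\alpha>2$ of Fraenkel's theorem is what legitimises the Beatty description, which is why the statement excludes $d=1$.
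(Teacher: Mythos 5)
Your ``direct'' route is the same counting strategy as the paper's own proof, but it is missing the one idea that makes that strategy work: an exact description of the \emph{repetitions} in the wall column. The wall column is not strictly increasing and is not ``an enumeration of a complementary pair of Beatty sequences'' --- it is a non-decreasing list containing every non-negative integer, with certain values occurring more than once (for $d=2$ it reads $0,1,2,2,3,4,4,5,6,\ldots$, see Table~\ref{table:4}). Fraenkel's criterion, which concerns two Beatty sequences partitioning $\mathbb N$, says nothing about such a sequence, so it cannot ``legitimise the Beatty description''. What the paper does instead is identify the repeated values exactly: deleting the leading digit of a row label gives either an Ostrowski word $v$ (each such $v$ arises from the row labelled $1v$), which makes every integer appear, or a non-Ostrowski word $dw$, whose value is $\mathtt{out}(w)+d=\lfloor n\alpha\rfloor$ by Lemma~\ref{lem:1} combined with $\alpha-d=\frac1\alpha$. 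This yields the exact count $k+\lfloor k/\alpha\rfloor$ of wall entries preceding the occurrence of $k$, and the corollary follows by inverting this identity, with a separate computation for the index at which a value repeats. Your density claim --- ``the number of rows up to a given value grows like $(\alpha+1)/\alpha$ times the value'' --- is precisely this fact, asserted without proof; and a density statement alone can only ever identify the slope asymptotically, never the exact floor formula.

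Your plan for ``nailing the constant'' cannot close this gap: knowing that $A_{1,0}=0$ and that successive differences lie in $\{0,1\}$ does not determine a sequence --- one must know at exactly which indices the increment is $0$, i.e.\ which values repeat, which is again the missing point. Two further errors. First, the identity $A_{m,1}=\lfloor\alpha A_{m,0}+\frac1\alpha\rfloor$ in your opening paragraph is false (for $d=2$, row $2$ of Table~\ref{table:4} has $A_{2,0}=1$ and $A_{2,1}=3$, while $\lfloor\alpha+\frac1\alpha\rfloor=2$): the step from $A_{m,0}$ to $A_{m,1}$ prepends a \emph{nonzero} digit, so it is not the map $\mathtt{out}$, and Lemma~\ref{lem:1} does not apply to it. Second, the hypothesis $d>1$ does not enter through Fraenkel's $\alpha>2$ condition (Fraenkel plays no role in this corollary) but through the combinatorics of the prefix deletion; this bookkeeping is genuinely delicate, since for $d\ge3$ an Ostrowski word $v$ occurs as the wall word of every row $jv$ with $1\le j\le d-1$, so the multiplicity structure itself depends on $d$ --- exactly the kind of phenomenon that a density or complementarity argument is blind to, and the reason a proof of this statement must work at the level of words rather than frequencies.
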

\begin{proof}
A wall term is represented by the word that labels its row, with the initial digit deleted.
This is either 
an Ostrowski word (indeed, all $w$ occur since $1w$ is trimmed) or a word with prefix $d$, which is a non-Ostrowski word. 
Since all Ostrowski words occur, the wall terms contain all non-negative integers. Some integers
are repeated by the non-Ostrowski words.
These non-Ostrowski words are given by $dw$ for an Ostrowski word $w$, if we allow the empty word 
$w=\epsilon$ for the first row.
The repetitions occur at $\mathtt{out}(w)+d$, which by Lemma~\ref{lem:1} is equal to
\[
\left\lfloor (n-1)\alpha+\frac 1\alpha\right\rfloor+d=\lfloor n\alpha\rfloor. 
\]
Here we put $n-1$ to start the count at zero, to include the empty word $w$.
The number of repetitions up to but not including 
$k$ is equal to $\lfloor \frac{k}{\alpha}\rfloor$.
Therefore, at index $m=k+\lfloor \frac{k}{\alpha}\rfloor$ we have $A_{m,0}=k$.
Expressing $k$ in terms of $m$ we find that
$
k=\frac {(m+\epsilon)\alpha}{1+\alpha}
$
for some $0<\epsilon<1$. Thus, if $k$ occurs first at index $m$, then
\[
A_{m,0}=\left\lfloor \frac {m\alpha}{\alpha+1}\right\rfloor.
\]
If $k$ repeats at the next index $m+1$, then $k=n\alpha-\epsilon$ for some $n$ and $0<\epsilon<1$.
Therefore $\lfloor \frac k\alpha\rfloor=n-1=\frac k\alpha+\frac \epsilon\alpha -1$.
Now by the same argument as above, $m+1=k+\lfloor \frac{k}{\alpha}\rfloor+1=k+\frac k\alpha+\frac\epsilon\alpha$.
We get $A_{m+1,0}=\left\lfloor \frac {(m+1)\alpha}{\alpha+1}\right\rfloor.$
\end{proof}

For $d=2$ we have sequence A049472. For $d=3$ it agrees up to the thirtieth term
with A093700, which is $\lfloor n\gamma\rfloor$ for $\gamma=-\log_{10}{(3-\sqrt 8)}$.
This is because $\alpha/(\alpha+1)$ is very close to $\gamma$.

The differences $A_{m+1,1}-A_{m,1}$ between consecutive entries in the first column of the
Ostrowski array are either equal to $\lfloor \alpha\rfloor$ or
$\lceil \alpha\rceil$. 
If we code these differences by zeros and ones, then we get a Sturmian sequence.
This relation between Beatty sequences and Sturmian sequences is well studied
and there is an algorithm to convert one into the other, see~\cite{arnoux}.
Differences between terms of a non-homogeneous Beatty sequence
$\lfloor mx+y\rfloor$ follow from the rotation of the circle over $x$, 
starting from $y$.
The first column of the Ostrowski array is non-homogeneous by Equation~\eqref{eq:column1},
but the reader may check that $\frac{-1}{\alpha(\alpha-1)}$ is in the forward
orbit of zero of the rotation (it is the $(d-1)$-th iterate). 

From the second column on, the differences $A_{m+1,k}-A_{m,k}$ all seem to
follow the same pattern in Table~\ref{table:4}. Furthermore, if we apply
the $\mathtt{out}$ operation to differences in the $k$-th column, then we
seem to get the differences in the $(k+1)$-th column. This follows from 
the following additive property of the $\mathtt{out}$ operator.

\begin{corollary}\label{cor:out} 
    If $i,j,k$ are such that
    \[\mathtt{out}(i)+k=\mathtt{out}(j),\]
    then
    \[
    \mathtt{out}^2(i)+\mathtt{out}(k)=\mathtt{out}^2(j)
    \]
\end{corollary}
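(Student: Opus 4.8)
The plan is to reduce everything to the single clean identity
\[
\mathtt{out}^2(n)=d\,\mathtt{out}(n)+n,
\]
valid for every natural number $n$. In terms of words this is transparent: if $n$ is represented by $d_1d_2\cdots d_i$ then $\mathtt{out}(n)$ is represented by $0d_1\cdots d_i$ and $\mathtt{out}^2(n)$ by $00d_1\cdots d_i$, so that $\mathtt{out}^2(n)=\sum_j d_jD_{j+2}$; applying $D_{j+2}=dD_{j+1}+D_j$ termwise gives the identity. (Alternatively it falls out of Equation~\eqref{eq:1} together with $\alpha+\beta=d$.) Granting this, I would rewrite the hypothesis as $\mathtt{out}(j)-\mathtt{out}(i)=k$ and compute
\[
\mathtt{out}^2(j)-\mathtt{out}^2(i)=d\bigl(\mathtt{out}(j)-\mathtt{out}(i)\bigr)+(j-i)=dk+(j-i).
\]
Comparing with the desired $\mathtt{out}^2(j)-\mathtt{out}^2(i)=\mathtt{out}(k)$, the corollary becomes equivalent to the single statement $j-i=\mathtt{out}(k)-dk$.

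First I would identify the right-hand side. By Lemma~\ref{lem:1} and the relations $\alpha-d=-\beta=\tfrac1\alpha$,
\[
\mathtt{out}(k)-dk=\left\lfloor \alpha k+\tfrac1\alpha\right\rfloor-dk=\left\lfloor (\alpha-d)k+\tfrac1\alpha\right\rfloor=\left\lfloor \tfrac{k+1}\alpha\right\rfloor,
\]
so that $k\mapsto\mathtt{out}(k)-dk$ is precisely the left inverse of $\mathtt{out}$ furnished by the identity above (indeed $\mathtt{out}(\mathtt{out}(n))-d\,\mathtt{out}(n)=n$). It then remains to prove $j-i=\lfloor (k+1)/\alpha\rfloor$ directly from $\mathtt{out}(j)-\mathtt{out}(i)=k$.

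For this last step I would use the floor estimate of Lemma~\ref{lem:1} in the form $\mathtt{out}(n)=\alpha n+\tfrac1\alpha-\theta_n$ with $\theta_n=\{\alpha n+\tfrac1\alpha\}\in[0,1)$. Subtracting,
\[
k=\mathtt{out}(j)-\mathtt{out}(i)=\alpha(j-i)+\theta_i-\theta_j,
\]
whence
\[
\frac{k+1}\alpha=(j-i)+\frac{1+\theta_i-\theta_j}\alpha.
\]
Since $\theta_i,\theta_j\in[0,1)$ we have $1+\theta_i-\theta_j\in(0,2)$, so the correction term lies in $(0,2/\alpha)$. The crux, and the only point where the standing hypothesis $d>1$ is used, is that $2/\alpha<1$ exactly when $\alpha>2$, and $\alpha=\tfrac{d+\sqrt{d^2+4}}2>2$ precisely for $d>1$. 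Thus the correction term lies in $[0,1)$ and $\lfloor (k+1)/\alpha\rfloor=j-i$, closing the argument.

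I expect the main obstacle to be the bookkeeping of the fractional part in this final step: one must keep the half-open convention $\theta_n\in[0,1)$ consistent so that $1+\theta_i-\theta_j$ is genuinely trapped in $(0,2)$ and the error is confined to $[0,1)$, and it is exactly here that $d>1$ is indispensable. For $d=1$ one has $\alpha<2$, the correction can exceed $1$, and the floor may jump, which matches the separate behaviour of the Fibonacci case. The two preliminary reductions, by contrast, are essentially formal once the recurrence identity $\mathtt{out}^2(n)=d\,\mathtt{out}(n)+n$ is in hand.
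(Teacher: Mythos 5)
Your proof is correct, and it takes a genuinely different route from the paper. The paper works entirely with error terms relative to multiplication by $\alpha$: it writes $\mathtt{out}^2(p)=\alpha\,\mathtt{out}(p)+\epsilon_p$ and $\mathtt{out}(k)=\alpha k+\epsilon_k$, multiplies the hypothesis by $\alpha$ to reduce the claim to $\epsilon_i+\epsilon_k=\epsilon_j$, notes that $\epsilon_i+\epsilon_k-\epsilon_j$ is an integer, and then pins it to zero by the interval estimates of Equation~\eqref{eq:interval} and Corollary~\ref{cor:3}. You instead exploit the exact integer identity $\mathtt{out}^2(n)=d\,\mathtt{out}(n)+n$ (which the paper never states, though it follows at once from prepending zeros and $D_{j+2}=dD_{j+1}+D_j$, or from Equation~\eqref{eq:1} with $\alpha+\beta=d$, $\alpha\beta=-1$). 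This makes the reduction purely algebraic over $\mathbb Z$ and concentrates all the analysis in the single statement $j-i=\mathtt{out}(k)-dk=\lfloor (k+1)/\alpha\rfloor$, which you verify by a one-line fractional-part estimate. Two things your route buys: the explicit formula $j-i=\lfloor(k+1)/\alpha\rfloor$ as a by-product, and full transparency about where $d>1$ enters --- you need $2/\alpha<1$, i.e.\ $\alpha>2$. The paper needs the same hypothesis, but it is hidden: the claim that $\epsilon_j+1$ avoids the interval $\left(-1+\frac1\alpha-\frac1{\alpha^2},\frac2\alpha-\frac1{\alpha^2}\right)$ is exactly the inequality $1\geq \frac2\alpha$. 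What the paper's formalism buys in exchange is that the same $\epsilon$-bookkeeping is reused later for the $\mathtt{nut}$ operator and the negative array, whereas your recurrence identity is specific to $\mathtt{out}$. Both proofs are sound; yours is arguably the cleaner one for this isolated statement.
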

\begin{proof}
    By Equation~\eqref{eq:interval} 
    \[\mathtt{out}(n)-\alpha n\in\left(\frac 1\alpha-1,\frac 1\alpha\right).\]
    By Corollary~\ref{cor:3} under $\mathtt{out}$ we get
    \[\mathtt{out}^2(n)-\alpha \mathtt{out}(n)\in\left(-\frac 1{\alpha^2},\frac 1\alpha-\frac 1{\alpha^2}\right).\]
    We write $\mathtt{out}^2(p)=\alpha\mathtt{out}(p)+\epsilon_p$ for
    $p=i,j$ and $\mathtt{out}(k)=\alpha k+\epsilon_k$.
    We need to prove that
    \[
    \alpha\mathtt{out}(i)+\epsilon_i+\alpha k+\epsilon_k=\alpha\mathtt{out}(j)+\epsilon_j,
    \]
    which reduces to
    \[
    \epsilon_i+\epsilon_k=\epsilon_j.
    \]
    Since both sides of the equation are integral $\epsilon_i+\epsilon_k-\epsilon_j\in\mathbb Z$.
    From the equations above we find that
    $\epsilon_i+\epsilon_k\in\left(-1+\frac 1\alpha-\frac 1{\alpha^2},\frac 2\alpha-\frac 1{\alpha^2}\right)$
    and that $\epsilon_j\in \left(-\frac 1{\alpha^2},\frac 1\alpha-\frac 1{\alpha^2}\right)$. It follows
    that neither $\epsilon_j+1$ not $\epsilon_j-1$ are in 
    $\left(-1+\frac 1\alpha-\frac 1{\alpha^2},\frac 2\alpha-\frac 1{\alpha^2}\right)$.
    Therefore, $\epsilon_i+\epsilon_k-\epsilon_j=0.$
\end{proof}
We remark that the $\mathtt{out}$ operator is not additive on the natural numbers, but nearly: $\mathtt{out}(i+j)-\mathtt{out}(i)-\mathtt{out}(j)\in\{-1,0,1\}$. This is called the linearity defect in~\cite{carton}.
This defect is zero if $i,j,i+j$ are in $\mathtt{out}(\mathbb N)$.

The recurrence extends to negative indices under $X_{-n-1}=-dX_{-n}+X_{-n+1}$,
which produces the bi-infinite array $A_{m,n}$ for $n\in\mathbb Z$
(the ExtraFibs are now ExtraPells or ExPells). 
As in the case of ExtraFibs, 
the signs alternate and the absolute values form satisfy the recursion,
if we read from left to right.
The wall therefore reappears on the left.
It is the index from which the absolute values form a row in $A_{m,n}$
for positive $n$.
We again get a building, 
but its structure is not as regular as that of the Empire State Building.
We depict the building for $d=2$ in Table~\ref{table:5}
and we call this the Pell Tower. It is a terrace building that
displays the following patterns:
\begin{enumerate}
    \item  The distance between the walls is either $|w|$ or $|w|+1$, where $|w|$ denotes the length
    of the word that generates the row. Distance $|w|+1$ appears to be prevalent. This is illustrated by the 
    \emph{red wall} at distance $|w|$ from the right wall, where we chose red since this is the color for
    negative numbers.
    \item Columns on the left of the red wall contain positive and negative numbers. The sign depends on whether $w$ starts with $02$ or not. \label{obs2}
    \item All integers (positive and negative, but not zero) appear to the left of the red wall. 
    If a number has a negative sign left of the left wall, then it has a positive sign in between
    the red wall and the left wall, and vice versa. \label{obs3}
    \item If the left wall and the red wall coincide, then the term next to it is positive.
\end{enumerate}

\begin{table}[ht!]
\tiny{
\[ 
\begin{NiceArray}{rrrrrrrrrrl}[columns-width=auto]
&&&&&&&&&&w\\
\cdots&-70&29 &-12 &5 &-2 &1 &0 &1 &\cdots&1\\
\cdots&-41&17 &-7 &3 &-1 &1 &1 &3 &\cdots&11\\
\cdots&58&-24 &10 &-4 &2 &0 &2 &4 &\cdots&02\\
\cdots&-82&34 &-14 &6 &-2 &2 &2 &6 &\cdots&101\\
\cdots&-53&22 &-9 &4 &-1 &2 &3 &8 &\cdots&111\\
\cdots&46&-19 &8 &-3 &2 &1 &4 &9 &\cdots&021\\
\cdots&-94&39 &-16 &7 &-2 &3 &4 &11 &\cdots&102\\
\cdots&-65&27 &-11 &5 &-1 &3 &5 &13 &\cdots&1001\\
\cdots&-36&15 &-6 &3 &0 &3 &6 &15 &\cdots&1101\\
\cdots&-63&-26 &11 &-4 &3 &2 &7 &16 &\cdots&0201\\
\cdots&-77&32 &-13 &6 &-1 &4 &7 &18 &\cdots&1011\\
\cdots&-48&20 &-8 &4 &0 &4 &8 &20 &\cdots&1111\\
\cdots&51&-21 &9 &-3 &3 &3 &9 &21 &\cdots&0211\\
\cdots&-89&37 &-15 &7 &-1 &5 &9 &23 &\cdots&1021\\
\cdots&-60&25 &-10 &5 &0 &5 &10 &25 &\cdots&1002\\
\cdots&-31&13 &-5 &3 &1 &5 &11 &27 &\cdots&1102\\
\cdots&68&-28 &12 &-4 &4 &4 &12 &28 &\cdots&0202\\
\cdots&-72&30 &-12 &6 &0 &6 &12 &30 &\cdots&10001\\
\cdots&-43&18&-7&4&1&6&13&32&\cdots&11001\\
\cdots&56&-23&10&-3&4&5&14&33&\cdots&02001\\
\cdots&-84&35&-14&7&0&7&14&35&\cdots&10101\\
\cdots&-55&23&-9&5&1&7&15&37&\cdots&11101\\
\cdots&44&-18&8&-2&4&6&16&38&\cdots&02101\\
\cdots&-96&40&-16&8&0&8&16&40&\cdots&10201\\
\cdots&-67&28&-11&6&1&8&17&42&\cdots&10011\\
\cdots&-38&16&-6&4&2&8&18&44&\cdots&11011\\
\cdots&61&-25&11&-3&5&7&19&45&\cdots&02011\\
\cdots&79&33&-13&7&1&9&19&47&\cdots&10111\\
\cdots&-50&21&-8&5&2&9&20&49&\cdots&11111 \\
\cdots&49& -20&9&-2&5&8&21&50&\cdots&02111 \\
\cdots&-83& 38&-15&8&1&10&21&52&\cdots&10211 \\
\cdots&-62& 26&-10&6&2&10&22&54&\cdots&10021 \\
\cdots&-33& 14&-5&4&3&10&23&56&\cdots&11021 \\
\cdots&66& -27&12&-3&6&9&24&57&\cdots&02021 \\
\cdots&-74& 31&-12&7&2&11&24&59&\cdots&10002 \\
\cdots&-45& 19&-7&5&3&11&25&61&\cdots&11002\\
\cdots&54& -22&10&-2&6&10&26&62&\cdots&02002 \\
\cdots&-86& 36&-14&8&2&12&26&64&\cdots&10102\\
\cdots&-57& 24&-9&6&3&12&27&66&\cdots&11102 \\
\cdots&42& -17&8&-1&6&11&28&67&\cdots&02102 \\
\cdots&-98& 41&-16&9&2&13&28&69&\cdots&10202\\
\cdots&-69&29&	-11&	7&	3&	13&	29&	71&\cdots&100001\\
\cdots&-40&17&	-6&	5&	4&	13&	30&	73&\cdots&110001\\
\cdots&59&-24&	11&	-2&	7&	12&	31&	74&\cdots&020001\\
\cdots&-81&34&	-13&	8&	3&	14&	31&	76&\cdots&101001\\
\cdots&-52&22&	-8&	6&	4&	14&	32&	78&\cdots&111001\\
\cdots& 47&-19&	9&	-1&	7&	13&	33&	79&\cdots&021001\\
\cdots& -93&39&	-15&	9&	3&	15&	33&	81&\cdots&102001\\
\cdots& -64&27&	-10&	7&	4&	15&	34&	83&\cdots&100101\\
\cdots& -35&15&	-5&	5&	5&	15&	35&	85&\cdots&110101\\
\cdots& 64&-26&	12&	-2&	8&	14&	36&	86&\cdots&020101\\
\cdots& -76&32&	-12&	8&	4&	16&	36&	88&\cdots&101101\\
\cdots& -47&20&	-7&	6&	5&	16&	37&	90&\cdots&111101\\
\iddots&\vdots & \vdots & \vdots  & \vdots & \vdots & \vdots & \vdots & \vdots &\ddots
 \CodeAfter 
  \tikz \draw [line width=0.6 mm] (2-|8) -- (2-|9) |- (55-|9);
  \tikz \draw [red] [line width=0.6 mm] (3-|7) |- (5-|6) |- (9-|5) |- (19-|4) |- (43-|3) |- (55-|3);
  \tikz \draw [line width=0.6 mm] (55-|2)-|(53-|2)-|(52-|3)-|(50-|2)|-(49-|3)|-(48-|2)|-(46-|3)|-(45-|2)|-(43-|3)-|(29-|3)|-(26-|4)|-(25-|3)-|(24-|3)-|(24-|4)|-(22-|3)-|(21-|3)-|(12-|4)-|(11-|5)-|(9-|4)-|(6-|5) -| (4-|6)|-(3-|7) -| (2-|8);
   \tikz \draw [blue][line width=0.2 mm] (3-|8) -- (3-|9); 
   \tikz \draw [line width=0.1 mm] (4-|6) -- (4-|9); 
   \tikz \draw [blue][line width=0.2 mm] (5-|6) -- (5-|9);
   \tikz \draw [line width=0.1 mm] (6-|6) -- (6-|9); 
   \tikz \draw [blue][line width=0.2 mm] (11-|5) -- (11-|9); 
   \tikz \draw [blue][line width=0.2 mm] (14-|4) -- (14-|9); 
   \tikz \draw [line width=0.1 mm] (15-|4) -- (15-|9);
   \tikz \draw [blue][line width=0.2 mm] (17-|4) -- (17-|9);
   \tikz \draw [line width=0.1 mm] (19-|4) -- (19-|9);
   \tikz \draw [blue][line width=0.2 mm] (20-|4) -- (20-|9); 
   \tikz \draw [blue][line width=0.2 mm] (23-|4) -- (23-|9); 
   \tikz \draw [blue][line width=0.2 mm] (26-|4) -- (26-|9);
      \tikz \draw [line width=0.1 mm] (52-|2) -- (52-|9);
 \end{NiceArray}
 \]}
\caption{\small{The Pell Tower and its irregular left wall. On the right
we added a final column that
contains the trimmed words $w$ that generate the rows. Palindromic rows are underlined (blue if it contains zero) and they clearly
do not occur at regular distances as they do in the Empire State Building, although their
number of occurrences in blocks can be specified.
The distance between the right wall and the red wall is equal to $|w|$.}}\label{table:5}
\end{table}

We shall see that these observations can be made concrete for all $d\geq 2$
by using the
\emph{dual Ostrowski numeration system}, see~\cite[p. 181]{fogg}. 
The recursion $X_{n+1}=dX_n+X_{n-1}$ generates
the denominators $D_n$ which are the backbone of the Ostrowski system
in Equation~\eqref{eq:Os}. This is a numeration system for $\mathbb N$. 
If we extend the recursion $X_{n+1}=dX_n+X_{n-1}$ to a bi-infinite sequence, then we get the negative denominators $D_{-n}=(-1)^{n+1}D_n$.
They are the backbone of the \emph{dual} Ostrowski system, which 
is a numeration system for $\mathbb Z$. 
The following is
a special case of Proposition 6.4.19 from~\cite{fogg}.
It applies
to all $\alpha>1$, but
we only formulate it for $d>1$. The case $d=1$
was covered by Bunder who proved that the negative Fibonacci numbers
$F_{-1}, F_{-2},\ldots$ form a numeration system~\cite{bunder}.

\begin{proposition}{\label{pro:1}}
    Let $d>1$ be fixed.
    Every integer $N$ (positive or negative) can be represented uniquely in the form
    \begin{equation}\label{eq:2}
        N=\sum_{1\leq j\leq i} d_jD_{-j} 
    \end{equation} 
    with digits $d_j\in \{0,1,\ldots, d\}$ such that $d_{i+1}=0$ if $d_{i}=d$.
    The length of $w=d_1\cdots d_i$ determines the sign of $N$, which is equal to $(-1)^{|w|+1}$.
\end{proposition}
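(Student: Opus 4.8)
The plan is to study the evaluation map $\Phi(d_1\cdots d_i)=\sum_{1\le j\le i} d_jD_{-j}$ directly and to show it is a bijection from admissible strings onto $\mathbb Z$ by pinning down, for each length, the exact set of integers it hits. Throughout I read the digit rule as $d_j=d\Rightarrow d_{j+1}=0$, with the absent digit $d_{i+1}$ taken to be $0$ (so a $d$ is permitted in top position). Write $S_{\le i}$ for the admissible strings supported on positions $1,\dots,i$. I would first record the telescoping identities $\sum_{m=1}^{k}D_{2m-1}=D_{2k}/d$ and $\sum_{m=1}^{k}D_{2m}=(D_{2k+1}-1)/d$, both immediate from $dD_m=D_{m+1}-D_{m-1}$. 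Since $D_{-j}=(-1)^{j+1}D_j$ is positive for odd $j$ and negative for even $j$, the admissible string $d\,0\,d\,0\cdots$ maximises every term of $\Phi$ at once and its mirror $0\,d\,0\,d\cdots$ minimises it; evaluating these two extremes with the identities shows that the image of $S_{\le i}$ should be the integer interval $I_i=[\,1-D_i,\ D_{i+1}\,]$ for odd $i$ and $I_i=[\,1-D_{i+1},\ D_i\,]$ for even $i$, each containing exactly $D_i+D_{i+1}$ integers.

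I would then prove by induction on $i$ that $\Phi\colon S_{\le i}\to I_i$ is a bijection. It is worth noting at the outset why a one-line ``leading term dominates'' argument fails: the odd-indexed $D_{-j}$ are positive, so the tail $\sum_{l<j}d_lD_{-l}$ can be as large as the top term, and only precise interval bookkeeping controls it. The \textbf{main obstacle} surfaces in the inductive step. Passing from $i$ to $i+1$ means appending a top digit $d_{i+1}\in\{1,\dots,d\}$, but the adjacency rule then forbids $d_i=d$, so the lower block does not range over all of $S_{\le i}$, only over the restricted family $S'_{\le i}$ of admissible strings whose top digit is not $d$. I therefore strengthen the induction to carry, in parallel, the claim that $\Phi$ maps $S'_{\le i}$ bijectively onto $I'_i=[\,1-D_i,\ D_{i+1}-D_i\,]$ (odd $i$) or $[\,1+D_i-D_{i+1},\ D_i\,]$ (even $i$), an interval of length $D_{i+1}$; a transfer-matrix count that splits on whether the top digit is $d$ confirms $|S_{\le i}|=D_i+D_{i+1}$ and $|S'_{\le i}|=D_{i+1}$, matching these interval sizes.

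With the refined hypothesis the step is a tiling of consecutive intervals. For $i+1$ odd one has $D_{-(i+1)}=D_{i+1}>0$, and the translates $t\,D_{i+1}+I'_i$ for $t=1,\dots,d$ are blocks that abut exactly (the top of the $t$-block and the bottom of the $(t+1)$-block differ by $1$) and sit directly on top of $I_i$; their union telescopes to $[\,1-D_{i+1},\ D_i+dD_{i+1}\,]=[\,1-D_{i+1},\ D_{i+2}\,]=I_{i+1}$ by $D_{i+2}=dD_{i+1}+D_i$, while keeping only $t\le d-1$ reproduces $I'_{i+1}$. The even step is the mirror image, with $D_{-(i+1)}<0$ and blocks descending below $I_i$. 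Because the pieces are disjoint integer intervals on each of which $\Phi$ is bijective by hypothesis, $\Phi$ is bijective on $S_{\le i+1}$, and both parts of the hypothesis advance; the base case $i=1$ is immediate.

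Finally, since $D_{i+1}\to\infty$ and $1-D_i\to-\infty$ the intervals $I_i$ exhaust $\mathbb Z$, and the nesting $S_{\le i}\subset S_{\le i+1}$ shows that every integer has exactly one admissible representation, of length equal to the position of its highest nonzero digit. The sign assertion then reads off the same intervals: a string of length exactly $i$ lands in $I_i\setminus I_{i-1}$, which is $[\,D_{i-1}+1,\ D_{i+1}\,]\subset\mathbb Z_{>0}$ for $i$ odd and the symmetric negative interval for $i$ even, so the sign of $N$ is $(-1)^{i+1}=(-1)^{|w|+1}$.
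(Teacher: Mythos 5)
Your proof is correct, and it takes a genuinely different route from the paper for a simple reason: the paper does not prove Proposition \ref{pro:1} at all. It is stated as a special case of Proposition 6.4.19 of Fogg's book on dual numeration systems (which applies to every $\alpha>1$), with Bunder's work cited for the excluded case $d=1$. What you have produced is a self-contained, elementary replacement for that citation, specific to the quadratic case where $D_{-j}=(-1)^{j+1}D_j$. I checked the load-bearing steps and they hold: the telescoping identities follow from $dD_m=D_{m+1}-D_{m-1}$; the transfer-matrix counts $|S_{\le i}|=D_i+D_{i+1}$ and $|S'_{\le i}|=D_{i+1}$ (condition on the first digit to get $a_i=da_{i-1}+a_{i-2}$, resp.\ $b_i=db_{i-1}+b_{i-2}$, with the right initial values) match the interval lengths; and in the tiling step the blocks really do abut, e.g.\ for $i+1$ odd the top of the $t$-block is $D_i+tD_{i+1}$ and the bottom of the $(t+1)$-block is $1+D_i+tD_{i+1}$, so the union telescopes to $[1-D_{i+1},D_{i+2}]$ via $D_{i+2}=dD_{i+1}+D_i$. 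Your strengthening of the induction to the restricted family $S'_{\le i}$ is exactly the right move; without it the step genuinely fails, since appending a nonzero top digit forbids $d_i=d$ below it. The two approaches buy different things. The citation inherits full generality (arbitrary $\alpha>1$, arbitrary continued fractions) at zero cost in space, which suits the paper since it later invokes the referee's observation that bi-infinite Ostrowski arrays exist for all $\alpha>1$. Your argument is restricted to $\alpha=\frac{d+\sqrt{d^2+4}}{2}$ but yields strictly more explicit information: the exact image intervals $I_i$, from which the sign statement is an immediate read-off ($I_i\setminus I_{i-1}=[D_{i-1}+1,D_{i+1}]$ for odd $i$ and its negative mirror for even $i$), whereas in the paper the sign claim is simply part of the imported statement. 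One presentational remark: your reading of the digit condition (a trailing digit $d$ is permitted, i.e.\ the absent $d_{i+1}$ counts as $0$) is the correct interpretation of the proposition, and it agrees with the paper's later gloss that in the dual system each $d$ is followed by zero \emph{unless it is the final digit}; it would be worth stating this equivalence explicitly, since the admissibility of the padded string $(d,0,0,\ldots)$ is what makes your sets $S_{\le i}$ close under padding and hence makes the uniqueness argument across different lengths work.
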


In an Ostrowski word, each $d$ is
preceded by zero. 
For the dual Ostrowski representation, each $d$ is followed by zero unless it is the final digit.
If we switch from lsd to msd representation, then the dual Ostrowski representation
is again an Ostrowski word.
Unless the msd representation starts with a~$d$. 
That is why in this case, we replace $w$ by $0w$, which represents the same number
and is an Ostrowski word. 
For the msd representation of the dual Ostrowski numeration systems, the 
initial digit is either equal to $0<j<d$, or its initial two
digits are $0d$. 
This has the pleasing effect that the words that label the rows in Table~\ref{table:5}
can also be read as msd representations in the dual system, representing terms $A_{m,n}$
left of the red wall.

\begin{lemma}
    Each integer occurs exactly once to the left of the red wall.
\end{lemma}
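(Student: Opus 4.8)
The plan is to turn Proposition~\ref{pro:1} into a bijection between the cells lying to the left of the red wall and the nonzero integers. Write the row label as $w_m=d_1d_2\cdots d_k$ (least significant digit first), so that $A_{m,n}$ is represented by $0^{\,n-1}w_m$ for $n\geq 1$; reading off the digits gives $A_{m,n}=\sum_{i=1}^{k}d_iD_{n-1+i}$ for $n\geq 1$. Both sides satisfy the recurrence $X_{n+1}=dX_n+X_{n-1}$ in the index $n$ and they agree for two consecutive values, so the identity
\[
A_{m,n}=\sum_{i=1}^{k}d_iD_{n-1+i}
\]
holds for every $n\in\mathbb Z$. The right wall sits between $A_{m,0}$ and $A_{m,1}$ and the red wall is $k=|w_m|$ columns to its left, so the cells strictly to the left of the red wall in row $m$ are exactly $A_{m,-k-t}$ for $t\geq 0$.

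Next I would locate these values inside the dual system. Setting $j=k+t+1-i$ in the closed form gives
\[
A_{m,-k-t}=\sum_{j=t+1}^{k+t}d_{\,k+t+1-j}\,D_{-j},
\]
which is precisely the integer whose dual representation, written most significant digit first, is the string $d_1d_2\cdots d_k$ followed by $t$ zeros. This is the ``pleasing effect'' already noted: the row label, reread as an msd word in the dual system, is the first cell $A_{m,-k}$ to the left of the red wall, and appending $t$ trailing zeros moves $t$ steps further left. I would then check that this string is the \emph{canonical} representation of Proposition~\ref{pro:1}: under the reversal $i\mapsto k+t+1-i$ the Ostrowski rule ``each $d$ is preceded by $0$'' becomes the dual rule ``each $d$ is followed by $0$'', while the bound $d_1<d$ from the definition of an Ostrowski word guarantees that the leading digit is either in $(0,d)$ or, when $d_1=0$, is the mandatory $0$ in a leading block $0d$ forced by trimmedness. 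Hence $A_{m,-k-t}$ is exactly the integer with canonical dual word $w_m0^{\,t}$.

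With this dictionary the bijection is immediate. The map $(m,t)\mapsto A_{m,-k_m-t}$, for $m\geq 1$ and $t\geq 0$, is \textbf{injective}: equal values have equal canonical dual words by Proposition~\ref{pro:1}, so $w_m0^{\,t}=w_{m'}0^{\,t'}$; since a trimmed Ostrowski word arises from Equation~\eqref{eq:Os} its top digit $d_k$ is nonzero, i.e.\ the associated dual word has no trailing zero, so stripping the trailing zeros recovers $w_m=w_{m'}$ and $t=t'$, and distinctness of the row labels forces $m=m'$. It is \textbf{surjective}: given a nonzero $N$, take its canonical dual representation from Proposition~\ref{pro:1}, write it most significant digit first, and strip its trailing zeros to obtain $w0^{\,t}$ with $w$ ending in a nonzero digit; the condition translation of the previous paragraph shows $w$ is a trimmed Ostrowski word, so $w=w_m$ for a unique row $m$ and $N=A_{m,-k_m-t}$. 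Thus every nonzero integer occurs exactly once; the integer $0$ corresponds to the empty word and is the wall value, which is why it is the single value absent to the left of the red wall, matching the parenthetical ``but not zero'' of the preceding observation.

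The step I expect to be the main obstacle is the condition-matching in the second paragraph: verifying at the level of digit strings that trimmed Ostrowski words correspond bijectively to canonical padded msd dual words with nonzero trailing digit. The delicate case is the leading block $0d$, where the extra $0$ is carried by trimmedness rather than by value; this is exactly the phenomenon responsible for the distance-$|w|$ versus distance-$|w|+1$ dichotomy and for the sign rule keyed on the prefix $02$ recorded in the observations. Once this correspondence is pinned down, uniqueness in Proposition~\ref{pro:1} does all the remaining work.
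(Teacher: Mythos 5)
Your proof is correct and takes essentially the same route as the paper: both arguments rest on Proposition~\ref{pro:1} together with the identification of the cells left of the red wall with the msd dual words $w_m0^t$ (row label plus trailing zeros), so that uniqueness of the dual representation yields the bijection onto the nonzero integers. The only difference is completeness, not method: the paper simply invokes this identification (the ``pleasing effect'' asserted just before the lemma, i.e.\ each step left appends a zero), whereas you derive it explicitly from the closed form $A_{m,n}=\sum_i d_iD_{n-1+i}$ extended to all $n\in\mathbb Z$, and you make explicit the one genuinely delicate point --- the $0d$-prefix case, where the canonical dual word must be restored to a trimmed Ostrowski word by prepending $0$, exactly the paper's ``replace $w$ by $0w$'' convention.
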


\begin{proof}
    The rows are labelled by Ostrowski words $w$. Each integer has a unique msd representation
    $w0^k$. To get to the red wall, we need to take $|w|$ steps to the left of the right wall.
    The number immediately to the left of the red wall has msd representation $w$ in the dual
    numeration system. If we take
    $k$ further steps, we get to $w0^k$. The length of the word determines the sign. 
    This partly explains observations \ref{obs2} and \ref{obs3}.
\end{proof}

The Ostrowski array $A_{m,n}$ with $m,n\geq 1$ starts from the right wall. 
Its counterpart, the \emph{negative Ostrowski array}, starts from the red wall.
The red wall term in the row labelled by $w$ is $A_{m,r}$ with $r=1-|w|$ (in which we
suppress that it depends on $m$) then we say that
\begin{equation}\label{negOst}
    \bar A_{m,n}=A_{m,r-n}
\end{equation}
is the negative Ostrowski array for $m,n\geq 1$.
Inhabitants of the Pell Tower enjoy the view of these two gardens.
The number of terms inside the building on level $m$ is equal to $1-r=|w|$, where $w$
is the $m$-th Ostrowski word in the radix order.

The operation $n\to\mathtt{out}(n)$ moves one step to the right in the Ostrowski array.
Its counterpart $n\to\text{nut}(n)$, the negative $\mathtt{out}$, takes one step to the left in
the negative Ostrowski array. If $u$ is the msd dual representation
of $n$, then $u0$ is the msd dual representation of $\text{nut}(n)$. It appends a zero.
The observations on the Pell tower that we made above are all consequences of the
following lemma.

\begin{lemma}\label{lem:3}
For any integer $n$ we have \emph{$\mathtt{nut}$}$(n)=\lceil -n \alpha \rceil$.
\end{lemma}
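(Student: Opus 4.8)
The plan is to mirror the proof of Lemma~\ref{lem:1}, replacing the Ostrowski system by its dual and $\mathtt{out}$ by $\mathtt{nut}$. Write $n$ in the dual Ostrowski system of Proposition~\ref{pro:1} as $n=\sum_{1\le j\le i}d_jD_{-j}$. The first step is to record what ``appending a zero'' does numerically: reading $u0$ as an msd word shifts every digit one denominator further, so that
\[
\mathtt{nut}(n)=\sum_{1\le j\le i}d_jD_{-(j+1)},
\]
exactly as prepending a zero turned $\sum d_jD_j$ into $\sum d_jD_{j+1}=\mathtt{out}(n)$ in Lemma~\ref{lem:1}.

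Second, I would compute $\mathtt{nut}(n)+\alpha n$ from the Binet formula~\eqref{eq:binet}, now evaluated at negative indices. Since $\alpha\beta=-1$, the identity $D_{-(j+1)}+\alpha D_{-j}=\alpha^{-j}$ should fall out: the $\beta$-contributions cancel because $1+\alpha\beta=0$, while the $\alpha$-contributions collapse via $1+\alpha^2=\alpha(\alpha-\beta)$. Summing over $j$ gives the clean identity
\[
\mathtt{nut}(n)+\alpha n=\sum_{1\le j\le i}d_j\alpha^{-j},
\]
the exact analogue of Equation~\eqref{eq:1} and the dual of Corollary~\ref{cor:3}. This computation is routine once the cancellation is spotted.

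Third---and this is where the real work lies---I must show the right-hand side lies in $[0,1)$. The lower bound is immediate since $\alpha^{-j}>0$. For the upper bound the digit constraint of Proposition~\ref{pro:1} (each $d$ is followed by a $0$, except possibly the final digit) is essential, just as ``the first digit is bounded by $d-1$'' was essential in Lemma~\ref{lem:1}. The key numerical fact is $d\alpha^{-1}+\alpha^{-2}=1$, equivalently $\alpha^2=d\alpha+1$; iterating it shows the infinite word $d0d0\cdots$ has value exactly $1$. I would then prove $\sum_{j\le i}d_j\alpha^{-j}<1$ for every finite valid word by induction on its length: if $d_1<d$ the suffix is again valid and the inductive bound yields a total below $(d_1+1)\alpha^{-1}\le d\alpha^{-1}=1-\alpha^{-2}$; if $d_1=d$ then $d_2=0$ is forced and the total is below $d\alpha^{-1}+\alpha^{-2}=1$. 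The main obstacle is precisely this combinatorial optimisation: one must verify that the constraint prevents the geometric series from ever reaching $1$, and the two-case split on the leading digit is what makes the strictness work.

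Finally, since $\mathtt{nut}(n)$ is an integer with $\mathtt{nut}(n)+\alpha n\in[0,1)$, it is the unique integer in $[-\alpha n,\,-\alpha n+1)$, which by definition equals $\lceil -\alpha n\rceil$. (For $n=0$ the word is empty and both sides vanish.) This completes the argument.
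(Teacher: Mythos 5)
Your proof is correct and follows essentially the same route as the paper's: write $n$ in the dual Ostrowski system, use the Binet-derived identity $D_{-j-1}+\alpha D_{-j}=\alpha^{-j}$ to get $\mathtt{nut}(n)+\alpha n=\sum_j d_j\alpha^{-j}$, and show this digit sum lies in $[0,1)$ so that $\mathtt{nut}(n)$ is the unique integer in $[-\alpha n,-\alpha n+1)$. The only difference is one of rigor, not of route: where the paper bounds the sum by asserting it is maximized by the pattern $d0d0\cdots$ (a geometric series summing to exactly $1$), you prove the strict bound $<1$ by induction on the word length, which is a careful justification of the same estimate.
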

\begin{proof}
    The Equation~\eqref{eq:binet} holds for all integers and
    as in the proof of Lemma~\ref{lem:1} we find 
    $D_{-n-1}+\alpha D_{-n}=\alpha^{-n}.$
    If $n=\sum_{1\leq j\leq i} d_jD_{-j}$
    then
    \begin{equation}\label{eq:nutalpha}
      \mathtt{nut}(n)+\alpha n=\sum_{1\leq j\leq i} d_j\alpha^{-j}\geq \frac 1\alpha.  
    \end{equation} 
    The sum $\sum_{1\leq j\leq i} d_j\alpha^{-j}$ is maximized by
    taking all digits equal to $d$ at odd indices and zero at even indices.
    Since $d(\alpha^{-1}+\alpha^{-3}+\cdots)=1$ we get
    $\mathtt{nut}(n)+\alpha n<1$.
\end{proof}

This expression for $\mathtt{nut}$ is simpler than the one for $\mathtt{out}$ in Lemma~\ref{lem:1}.
It implies that the lsd dual representation can be determined by
a simple divide and round.
Indeed, to find the lsd representation of $N_1$, compute $N_2=\lceil -N_1/\alpha\rceil$ and put
$x_1=N_1-\lceil -N_2\alpha\rceil$. 
The lemma implies that $N_1-x_1=\text{nut}(N_2)$,
hence the representation of $N_1$ has least significant digit $x_1$. 
Continue with $N_2$ to find its digit $x_2$, etc.
Terminate as soon as $0< N_k \leq d$ and put $x_k=N_k$. 
This is a standard digit generating
procedure
known as the greedy beta expansion~\cite{dajani}. To see that it produces Ostrowski words,
observe that digit $d$ occurs only if $N_1\in [-N_2\alpha+d, -N_2\alpha+\alpha)$
which has length $\alpha-d=\frac 1\alpha$. In particular $-N_2\alpha=N_1-d-\epsilon$
for some $\epsilon<\frac 1\alpha$. Therefore 
\[
\frac {-N_2}{\alpha}=-N_2\alpha+dN_2=N_1-d-\epsilon + dN_2,
\]
which rounds up to $N_3=N_1-d+dN_2$ with digit
\[
x_2=N_2-\lceil -N_3\alpha\rceil<\epsilon \alpha
\]
If $x_1=d$ then $x_2=0$. The greedy beta expansion produces Ostrowski words.

\begin{corollary}\label{cor:3b}
    For a fixed $m$ and running index $n$, the sums $\bar A_{m,n+1}+\alpha \bar A_{m,n}$ 
    are positive and diminish by a factor $\frac 1\alpha$.
    Furthermore $\bar A_{m,1}+\alpha \bar A_{m,0}>1$. Therefore, 
    the largest index $n$ in the $m$-th row such that
    $A_{m,n-1}+\alpha A_{m,n}<1$ is at $r-1$. This is the initial term of the
    negative Ostrowski array.
\end{corollary}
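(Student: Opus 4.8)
The plan is to reduce everything to the Binet form of a single row. By the argument in the proof of Theorem~\ref{thm:1}, the $m$-th row is $A_{m,k}=a\alpha^k+b\beta^k$ with $a>0$, and this holds for all integers $k$ since the row is a bi-infinite solution of the recurrence. I introduce the quantity $T_k:=A_{m,k-1}+\alpha A_{m,k}$, which is exactly the expression appearing in the statement, and compute it using $\alpha\beta=-1$, so that $1+\alpha\beta=0$:
\[
T_k=a\alpha^{k-1}(1+\alpha^2)+b\beta^{k-1}(1+\alpha\beta)=a(1+\alpha^2)\,\alpha^{k-1}.
\]
The $\beta$-term is annihilated precisely by the relation $1+\alpha\beta=0$, leaving a pure geometric sequence. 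Hence $T_k>0$ for every $k$ and $T_{k-1}=T_k/\alpha$. Since $\bar A_{m,n+1}+\alpha\bar A_{m,n}=A_{m,r-n-1}+\alpha A_{m,r-n}=T_{r-n}$, and increasing $n$ lowers the index $r-n$ by one, these sums are positive and are divided by $\alpha$ at each step. This proves the first assertion.

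The crux is the second assertion, that crossing the red wall breaks the bound holding strictly inside the negative Ostrowski array: I must show $T_r=\bar A_{m,1}+\alpha\bar A_{m,0}\ge1$. For $n\ge1$ we have $\bar A_{m,n+1}=\mathtt{nut}(\bar A_{m,n})$, so $T_{r-1}=\mathtt{nut}(\bar A_{m,1})+\alpha\bar A_{m,1}$, which by Equation~\eqref{eq:nutalpha} equals $\sum_j d_j\alpha^{-j}$, where $d_1d_2\cdots$ are the least-significant-first dual digits of $\bar A_{m,1}$. The essential input is the identification made just before the statement: $\bar A_{m,1}$, the term immediately to the left of the red wall, has dual msd representation equal to the trimmed word $w$ that labels the row. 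Its last letter is the leading Ostrowski digit of $A_{m,1}$, which is nonzero; read in the dual system this last letter is the least significant digit, so $d_1\ge1$. Consequently $T_{r-1}\ge1/\alpha$, while the upper bound in Lemma~\ref{lem:3} gives $T_{r-1}<1$; multiplying by $\alpha$ yields $T_r\in[1,\alpha)$. The inequality is strict for every $m\ge2$, equality $T_r=1$ occurring only in the degenerate first row $w=1$, where the red and right walls coincide.

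Finally I assemble the threshold statement. From $T_k=T_r\,\alpha^{k-r}$ I read off $T_k\ge T_r\ge1$ for all $k\ge r$ and $T_k\le T_{r-1}<1$ for all $k\le r-1$. Writing the statement's expression as $A_{m,n-1}+\alpha A_{m,n}=T_n$, the largest index $n$ with $T_n<1$ is therefore $n=r-1$, and $A_{m,r-1}=\bar A_{m,1}$ is indeed the initial term of the negative Ostrowski array. The main obstacle is the middle paragraph: establishing that the least significant dual digit of $\bar A_{m,1}$ does not vanish. This is the only genuinely combinatorial point, it is where the lsd/msd and positive/dual bookkeeping must be kept straight, and it is exactly where the definition of the red wall enters, namely that $\bar A_{m,1}$ is the unique term to the left of which the dual representation carries no trailing zeros. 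One must also flag the borderline case $m=1$, where $T_r=1$ rather than $T_r>1$; the threshold conclusion survives because $T_r=1$ already fails the strict inequality $T_n<1$.
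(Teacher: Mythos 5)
Your proof is correct, and although it rests on the same two ingredients as the paper's proof --- Equation~\eqref{eq:nutalpha} together with the fact that the msd dual representation of $\bar A_{m,1}$ is the row label $w$ --- it arranges them differently, and the rearrangement is an improvement. The paper computes $\bar A_{m,n+1}+\alpha\bar A_{m,n}$ for \emph{every} $n\ge1$ from the dual words $w0^{n-1}$ and only then invokes Binet~\eqref{eq:binet} to propagate the factor $\alpha$ across the red wall to the rest of the row; you instead obtain the closed form $T_k=a(1+\alpha^2)\alpha^{k-1}$ for the entire bi-infinite row directly from $\alpha\beta=-1$, and consult the digits only once, at the anchor index $r-1$. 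What this buys is precisely the step where the paper is loose: the lower bound. The paper leans on the inequality $\sum_j d_j\alpha^{-j}\ge\frac1\alpha$ asserted inside Equation~\eqref{eq:nutalpha}, but that inequality is not true for arbitrary integers (for $n=D_{-2}$, whose lsd dual word is $01$, the sum is $\alpha^{-2}<\frac1\alpha$); it holds exactly when the least significant dual digit is nonzero, which is the combinatorial fact your middle paragraph isolates and proves: the least significant dual digit of $\bar A_{m,1}$ is the leading Ostrowski digit of $A_{m,1}$, hence nonzero (note also that the letters of $w$ must be read in reversed order when Equation~\eqref{eq:nutalpha} is applied, a point the paper's displayed sum glosses over --- for $w=02$ the unreversed reading would give $2\alpha^{-2}<\frac1\alpha$, while the true value is $2\alpha^{-1}$). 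Finally, your flag of the row $m=1$ is a genuine correction to the statement: for $w=1$ one has $\bar A_{1,1}+\alpha\bar A_{1,0}=D_{-1}+\alpha D_0=1$, so the corollary's strict ``$>1$'' must be read as ``$\ge 1$''; as you observe, the threshold conclusion at index $r-1$ survives, since $T_r=1$ still fails the strict inequality $T_n<1$.
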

\begin{proof}
    Let $w=d_1\cdots d_i$ be the word that represents the $m$-th row. 
    Then the msd representation of $\bar A_{m,n}=A_{m,r-n}$ is $w0^{n-1}$.
    According to Equation~\eqref{eq:nutalpha}
    \[A_{m,-n-1+r}+\alpha A_{m,-n+r}=\frac 1{\alpha^{n-1}}\sum_{1\leq j\leq i} d_j\alpha^{-j}.\]
    Thus the next sum 
    $A_{m,n+2}+\alpha A_{m,n+1}$ diminishes by a factor $1/\alpha$ and
    $A_{m,r-1}+\alpha A_{m,r}\in (0,1)$. 
    Equation~\eqref{eq:binet} holds for all $n$ and therefore $A_{m,n+1}+\alpha A_{m,n}$ increases
    by a factor $\alpha$ if $n$ increases by one, for the entire row.
    The index $r-1$, the first column of the negative Ostrowski array, is the unique
    index such that the sum is
    in $[\frac 1\alpha, 1)$.
\end{proof}

\begin{corollary}\label{cor:4}
    For every row $A_{m,n}$ in the Ostrowski array (fixed $m$) there exists a 
    row $\bar A_{k,n}$ in the negative Ostrowski array (fixed $k$) and a number $i\in\{0,1\}$ 
    such that $A_{m,n}=|\bar A_{k,i+n}|$.
\end{corollary}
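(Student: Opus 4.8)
The plan is to show that taking absolute values turns a row of the negative Ostrowski array into (a shift of) a row of the positive array, and then to invoke the bijectivity of this reflection to obtain the stated existence for every positive row. Fix a negative row and write $x_n=\bar A_{k,n}$ and $Y_n=|x_n|$. By Proposition~\ref{pro:1} the signs of the $x_n$ alternate, so the $Y_n$ form a positive sequence growing to the right, and I want to compare its one-step growth with the operator $\mathtt{out}$.

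From Lemma~\ref{lem:3} we have $x_{n+1}=\mathtt{nut}(x_n)=\lceil-\alpha x_n\rceil$, so on absolute values $Y_{n+1}=\lfloor\alpha Y_n\rfloor$ when $x_n>0$ and $Y_{n+1}=\lceil\alpha Y_n\rceil$ when $x_n<0$. To line this up with $\mathtt{out}(Y_n)=\lfloor\alpha Y_n+\tfrac1\alpha\rfloor$ I need the fractional part $\{\alpha Y_n\}$, and this is exactly what Corollary~\ref{cor:3b} provides: the quantity $\eta_n:=x_{n+1}+\alpha x_n$ lies in $(0,1)$, satisfies $\eta_n=\eta_1\alpha^{-(n-1)}$, and has $\eta_1\in[1/\alpha,1)$. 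Resolving the sign gives $\{\alpha Y_n\}=\eta_n$ when $x_n>0$ and $\{\alpha Y_n\}=1-\eta_n$ when $x_n<0$. By Lemma~\ref{lem:1}, $\mathtt{out}(Y_n)$ equals $\lfloor\alpha Y_n\rfloor$ precisely when $\{\alpha Y_n\}<1-\tfrac1\alpha$ and $\lceil\alpha Y_n\rceil$ otherwise; hence the two desired identities $Y_{n+1}=\mathtt{out}(Y_n)$ reduce to the single inequality $\eta_n<1-\tfrac1\alpha$ (if $x_n>0$) or $\eta_n<\tfrac1\alpha$ (if $x_n<0$).

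This is exactly where the hypothesis $d>1$ is used: then $\alpha=\tfrac{d+\sqrt{d^2+4}}2\ge 1+\sqrt2>2$, so $\tfrac1\alpha<1-\tfrac1\alpha$, and since $\eta_n=\eta_1\alpha^{-(n-1)}<\tfrac1\alpha$ for every $n\ge2$, both inequalities hold simultaneously from $n=2$ onward. Therefore $(Y_n)_{n\ge2}$ is a forward orbit of $\mathtt{out}$. By Lemma~\ref{lem:1} and Corollary~\ref{cor:1} the forward orbits of $\mathtt{out}$ are precisely the rows of the positive Ostrowski array (the first column being the complement of $\mathtt{out}(\mathbb N)$, and $\mathtt{out}$ being strictly increasing, hence injective). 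Consequently $(Y_n)_{n\ge2}$ coincides with a unique positive row up to a constant shift, say $Y_n=A_{m,n+c}$ for $n\ge2$.

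The only index left to examine is $n=1$, and pinning $c$ there is the step I expect to be the real obstacle: since $\eta_1$ can be close to $1$, the two roundings may disagree. But they disagree by \emph{at most one}, because $\mathtt{out}(Y_1)$ and $Y_2$ both lie in $\{\lfloor\alpha Y_1\rfloor,\lceil\alpha Y_1\rceil\}$, which differ by $1$. Thus either $Y_2=\mathtt{out}(Y_1)$, so the orbit extends to include $Y_1$ and $c=0$, or $|Y_2-\mathtt{out}(Y_1)|=1$, so $Y_2$ begins a fresh orbit and $c=-1$. Tracking the sign $(-1)^{|w|+1}$ against $\eta_1\in[1/\alpha,1)$ shows which case occurs (even $|w|$ always forces the second), which is the analytic source of the wall-distance dichotomy $|w|$ versus $|w|+1$; the delicate part is verifying that the leftover term $Y_1$ lands exactly on the first column or on the wall of the matched row, so that $c\in\{-1,0\}$ and no larger shift arises. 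Finally, reflecting a bi-infinite recurrent sequence interchanges its right part with the absolute values of its left part and is an involution, hence a bijection of the rows onto themselves; so every positive row $A_{m,\cdot}$ arises as $|\bar A_{k,\cdot}|$ for a suitable negative row $k$. Setting $i=-c\in\{0,1\}$ then gives $A_{m,n}=|\bar A_{k,i+n}|$, as claimed.
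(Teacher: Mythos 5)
Your first half is correct and is, modulo presentation, the same computation the paper performs in its second paragraph (there done only at positions $2,3$, which suffices by the recurrence): combining Corollary~\ref{cor:3b}, which gives $\eta_n=\bar A_{k,n+1}+\alpha\bar A_{k,n}=\eta_1\alpha^{1-n}$ with $\eta_1\in[\frac1\alpha,1)$, with the interval characterization of $\mathtt{out}$ in Lemma~\ref{lem:1}, you correctly conclude $Y_{n+1}=\mathtt{out}(Y_n)$ for all $n\ge2$, hence that $(Y_n)_{n\ge2}$ is a tail of a unique positive row, $Y_n=A_{m,n+c}$ with $c\ge-1$. The genuine gap is the step that carries the entire content of the corollary, namely ruling out $c\ge1$. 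Your dichotomy is a non sequitur in both branches, because ``$Y_1$ (resp.\ $Y_2$) starts the orbit I computed from the negative row'' is not the same as ``$Y_1$ (resp.\ $Y_2$) lies in the first column of the Ostrowski array''. If $Y_2=\mathtt{out}(Y_1)$, you only obtain $Y_n=A_{m,n+c}$ for $n\ge1$ with $c\ge0$; nothing excludes $Y_1=\mathtt{out}(z)$ for some $z$, which would give $c\ge1$ and destroy the bound $i\in\{0,1\}$. If $Y_2\ne\mathtt{out}(Y_1)$, nothing excludes $Y_2=\mathtt{out}(z)$ for some $z\ne Y_1$. You flag this yourself (``the delicate part is verifying that the leftover term $Y_1$ lands exactly on the first column\dots'') but never carry it out, and that verification is precisely the statement to be proved.

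For the record, the second branch can be repaired by the recurrence alone: if $c\ge0$, the predecessor of $Y_2$ in row $m$ is $A_{m,1+c}=A_{m,3+c}-dA_{m,2+c}=Y_3-dY_2=Y_1$, whence $Y_2=\mathtt{out}(Y_1)$; so $Y_2\ne\mathtt{out}(Y_1)$ does force $c=-1$. The first branch, however, needs a genuinely new ingredient: one must show that $\{\alpha Y_1\}=\eta_1\in[\frac1\alpha,1-\frac1\alpha)$ places $Y_1$ outside $\mathtt{out}(\mathbb N)$; a short computation from Lemma~\ref{lem:1}, using $\frac1\alpha=\alpha-d$, shows $M\in\mathtt{out}(\mathbb N)$ exactly when $\{\alpha M\}<\frac1{\alpha^2}$ or $\{\alpha M\}>1-\frac1\alpha+\frac1{\alpha^2}$, and the interval $[\frac1\alpha,1-\frac1\alpha)$ avoids both ranges. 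None of this appears in your proposal. The paper sidesteps the whole difficulty by anchoring at the positive row: it sets $N=A_{m,1}$, locates $N$ and $-N$ to the left of the red wall (each nonzero integer occurs there exactly once), observes that exactly one of $\lfloor\alpha N\rfloor$, $\lceil\alpha N\rceil$ equals $A_{m,2}$, and then uniqueness of occurrence forces the position $j$ at which $\pm N$ sits to satisfy $j+c=1$ with $c\ge-1$, i.e.\ $j\in\{1,2\}$. That orientation also makes your closing involution argument --- which your route needs for surjectivity, and which is itself only sketched --- unnecessary.
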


If $i=0$ then the left wall and the right wall coincide. If $i=1$ then 
there is a space of one, a terrace, between the red wall and the left wall.

\begin{proof}
Let $N=A_{m,1}$ be the first term of the row. 
Both $-N$ and $N$ occur somewhere in the negative Ostrowski array defined
in Equation~\eqref{negOst}.
The next terms are, respectively, $\text{nut}(-N)=\lceil N\alpha\rceil$ and 
$\text{nut}(N)=\lceil -N\alpha \rceil$. 
In absolute value these terms are $\lceil N\alpha\rceil$ and 
$\lfloor N\alpha \rfloor$. One and only one of these two absolute values is equal to $A_{m,2}$.
Two recursive sequences are equal if they have two identical consecutive terms.
Therefore $A_{m,n}$ occurs (possibly as a tail) in a unique row of
the negative Ostrowski array.

Now we know that the row $A_{m,n}$ occurs in a row $\bar A_{k,n}$ of
the negative array, we want to locate where it starts. Signs in the
negative array are alternating, and therefore $\bar A_{k,2}$ and
$\bar A_{k,3}$ have opposite signs. It follows that 
\[
-\frac 1\alpha|<|\bar A_{k,2}|-\alpha|\bar A_{k,3}|<\frac 1\alpha.
\]
By Lemma~\ref{lem:1}
it follows that $\mathtt{out}\left(|\bar A_{k,2}|\right)=|\bar A_{k,3}|.$
Therefore the row $|\bar A_{k,n}|$ running from index $n=2$ onward occurs
as a (tail of a) row in the Ostrowski array, which must be row $m$ by uniqueness.
We conclude that $i=0$ or $i=1$.
\end{proof}

All integers occur left of the red wall. The left wall marks where an alternating
copy of the Ostrowski array starts. From each pair $\{-n,n\}$ it contains one.
The other occurs on the terraces, between the left wall and the red wall.

\begin{corollary}\label{cor:5}
    If the red wall and the left wall coincide, then the number left of it is positive.
    Indeed, a natural number $N$ is next to these two coinciding walls if and only if
    $\alpha N-\lfloor \alpha N\rfloor \in \left[\frac 1\alpha, 1-\frac 1\alpha\right]$.
\end{corollary}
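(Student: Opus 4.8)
The plan is to read off from Lemma~\ref{lem:1} and Lemma~\ref{lem:3} what it means numerically for a natural number $N$ to sit immediately to the left of the coinciding walls, and then to split the characterising interval into a lower bound coming from the red wall (the row must \emph{start} there) and an upper bound coming from the left wall (there must be no terrace). The number immediately left of the walls is the first term $\bar A_{k,1}$ of a row of the negative Ostrowski array, i.e.\ a row-starter, and by Corollary~\ref{cor:4} the red wall and the left wall coincide precisely when the shift is $i=0$, that is when the reappearing positive row already begins at that term. So I must characterise (a) which $N$ are row-starters and (b) when $i=0$. The one device that turns both questions into statements about $\{N\alpha\}$ is the identity $\frac1\alpha=\alpha-d$ (from $\alpha^2=d\alpha+1$), whence $N\alpha=Nd+N/\alpha$ gives $\{N\alpha\}=\{N/\alpha\}$ for every integer $N$.

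For the lower bound, note that by Lemma~\ref{lem:3} the image of $\mathtt{nut}$ is exactly the set of non-row-starters (every $\bar A_{k,n}$ with $n\ge 2$ is $\mathtt{nut}(\bar A_{k,n-1})$, while the $\bar A_{k,1}$ are not hit), and a positive integer $N$ lies in this image iff $N=\lceil P\alpha\rceil$ for some $P\ge 1$, i.e.\ iff there is an integer in $((N-1)/\alpha,\,N/\alpha)$. Since that interval has length $\frac1\alpha<1$, the only candidate is $P=\lfloor N/\alpha\rfloor$, and a one-line check shows it lands inside iff $\{N/\alpha\}<\frac1\alpha$. Thus $N$ is a row-starter iff $\{N\alpha\}=\{N/\alpha\}\ge\frac1\alpha$, which is the left endpoint of the interval.

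For the upper bound I would use Lemma~\ref{lem:1} in the form $\mathtt{out}(N)=\lfloor\alpha N\rfloor+\lfloor\{\alpha N\}+\frac1\alpha\rfloor$, so that $\mathtt{out}(N)=\lfloor\alpha N\rfloor$ iff $\{\alpha N\}<1-\frac1\alpha$ and $\mathtt{out}(N)=\lceil\alpha N\rceil$ otherwise. When $\bar A_{k,1}=N>0$ we have $\bar A_{k,2}=\mathtt{nut}(N)=-\lfloor N\alpha\rfloor$, so the positive row present from index $2$ onward by Corollary~\ref{cor:4} extends back to include the row-starter, i.e.\ $i=0$, exactly when $\mathtt{out}(N)=\lfloor N\alpha\rfloor$, i.e.\ when $\{N\alpha\}<1-\frac1\alpha$. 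Combining the two bounds gives $\{N\alpha\}\in[\frac1\alpha,\,1-\frac1\alpha)$; the endpoints occur only degenerately (one checks $\{N\alpha\}=\frac1\alpha$ forces $N=1$, while $\{N\alpha\}=1-\frac1\alpha$ would force $(N+1)\alpha\in\mathbb Z$ and is impossible), which is why the statement may use the closed interval.

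Finally, the positivity claim amounts to ruling out a coincidence of walls with a \emph{negative} adjacent term, i.e.\ a negative row-starter $\bar A_{k,1}=-N$ with $i=0$. Repeating the computation with $\bar A_{k,2}=\mathtt{nut}(-N)=\lceil N\alpha\rceil$ shows a negative row-starter yields $i=0$ only when $\{N\alpha\}\ge 1-\frac1\alpha$; but then the estimate of the previous paragraph gives $N=\lfloor M\alpha\rfloor$ for $M=\lfloor N/\alpha\rfloor+1$, so $-N=\lceil -M\alpha\rceil=\mathtt{nut}(M)$ is itself in the image of $\mathtt{nut}$ and hence \emph{not} a row-starter---a contradiction. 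Thus $i=0$ forces the adjacent term to be positive. I expect the sign bookkeeping of $\bar A_{k,1}$---keeping straight which of $\pm N$ is the row-starter and how each choice interacts with the two fractional-part windows---to be the main obstacle; once the identity $\{N\alpha\}=\{N/\alpha\}$ is in hand, the remaining interval arithmetic is routine.
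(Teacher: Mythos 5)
Your argument is correct, but it decomposes the problem differently from the paper. The paper's pivot is a global pairing argument: every non-zero integer occurs exactly once left of the red wall, and the region left of the left wall carries each natural number exactly once in absolute value, so exactly one member of each pair $\{N,-N\}$ lies left of the left wall while the other sits on a terrace; hence the walls coincide exactly when \emph{both} $N$ and $-N$ are first-column entries of the negative array. Both endpoints of the interval then come from a single criterion, namely that of Corollary~\ref{cor:3b} (writing $\{x\}$ for $x-\lfloor x\rfloor$): $\{\alpha N\}\geq\frac 1\alpha$ for $N$, and $1-\{\alpha N\}\geq\frac 1\alpha$ for $-N$; positivity then follows by checking $\mathtt{out}(N)=\lfloor\alpha N\rfloor=|\mathtt{nut}(N)|$ for the positive member of the pair. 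You never invoke this pairing; you work row-locally. Your lower endpoint is the row-starter condition, which you re-derive from scratch via the image of $\mathtt{nut}$ and the identity $\{N\alpha\}=\{N/\alpha\}$ instead of quoting Corollary~\ref{cor:3b}; your upper endpoint is the no-terrace condition $i=0$, read off from Lemma~\ref{lem:1}; and positivity is a separate contradiction (a negative row-starter with $i=0$ forces $-N=\mathtt{nut}(\lfloor N/\alpha\rfloor+1)$, so $-N$ is not a row-starter after all). So where the paper interprets the endpoint $1-\frac 1\alpha$ as ``$-N$ also starts a row'', you interpret it as ``$N$'s row extends back across the terrace''; that these two readings agree is precisely what the paper's pairing argument supplies. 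Your route settles the endpoint subtleties more cleanly ($N=1$ attains $\frac 1\alpha$, while $1-\frac 1\alpha$ is never attained), a point the paper glosses over; the paper's route yields the terrace/pair dichotomy that the surrounding text reuses. Two steps you should still make explicit: (i) the implication $\mathtt{out}(N)=\lfloor N\alpha\rfloor\Rightarrow i=0$ needs that a first-column entry of the positive array is never a value of $\mathtt{out}$ (this is in the proof of Corollary~\ref{cor:1}), since otherwise the full row guaranteed by Corollary~\ref{cor:4} might still start at index $2$; and (ii) the fact that the entries $\bar A_{k,1}$ are not in the image of $\mathtt{nut}$ rests on the lemma that each integer occurs exactly once left of the red wall, not on Lemma~\ref{lem:3} alone.
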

\begin{proof}
All non-zero integers appear once left of the red wall. Half of the integers, one from each pair $\{-N,N\}$,
appears left of the left wall. The other half is on the terrace, the space between the left wall and the
red wall. If the left wall and the red wall coincide, then the term next to it is one from a pair $\{-N,N\}$.
In other words, the walls coincide if and only if both $N$ and $-N$ are in
the first column of the negative Ostrowski array.

A number $N$ is in the first column of the negative Ostrowski array if and only if
$\mathtt{out}(N)+\alpha N=\lceil -\alpha N\rceil +\alpha N\in \left[\frac 1\alpha,1\right)$. 
Both numbers $-N$ and $N$ are in the first column if and only if
\[
\alpha N-\lfloor \alpha N\rfloor \in \left[\frac 1\alpha, 1-\frac 1\alpha\right].
\]
It follows from the unique ergodicity of the rotation
that the fraction of numbers with
this property is equal to the length of the interval $\left[\frac 1\alpha, 1-\frac 1\alpha\right]$,
which is approximately $0.172..$ if $d=2$. This is why most numbers in the first column
are on the terrace in Table~\ref{table:5}.   

Which of the two $\{-N,N\}$ is on the terrace? Consider the positive number $N$. Its neighbor
$\mathtt{nut}(N)=\lceil -\alpha N\rceil$ has absolute value $\lfloor \alpha N\rfloor\geq \alpha N-\frac 1\alpha$.
Therefore 
\[\mathtt{out}(N)=\left\lfloor \alpha N+\frac 1\alpha\right\rfloor=\lfloor \alpha N\rfloor =|\mathtt{nut}(N)|.\]
The number $N$ has the same neighbor (in absolute value) in both the Ostrowski array and the negative Ostrowski array.
The left wall and the red wall coincide at $N$.
\end{proof}

The numbers that are not on the terrace in Table~\ref{table:5} are sequence A276879 in the OEIS. 
This sequence has density $0.172..$ as we have seen but
for larger $d$, the density of the natural numbers that are not on the terrace increases to one.

We extend the notion of a Stolarsky array to include recursive sequences that contain negative numbers:
\begin{enumerate}
    \item Each row satisfies the recurrence $X_{n+1}=-dX_n + X_{n-1}$.
    \item Each non-zero integer occurs once in the table.
    \item For every recurrent sequence $(B_n)$ there exists an $m$
    such that $(A_{m,n})$ is tail equivalent to $(B_n)$ or to $(-B_n)$.
\end{enumerate}

Our previous results imply:

\begin{theorem}\label{thm:3}
    The negative Ostrowski array  is a Stolarsky array.
\end{theorem}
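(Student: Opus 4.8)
The plan is to check the three defining properties of an (extended) Stolarsky array one at a time, dispatching the first two quickly and concentrating on the tail-equivalence property. Property~1 is immediate from the definition $\bar A_{m,n}=A_{m,r-n}$: rewriting the bi-infinite recurrence $A_{m,k+1}=dA_{m,k}+A_{m,k-1}$ backwards as $A_{m,k-1}=A_{m,k+1}-dA_{m,k}$ and substituting $k=r-n$ yields $\bar A_{m,n+1}=-d\,\bar A_{m,n}+\bar A_{m,n-1}$, as required. Property~2 is exactly the Lemma stating that each nonzero integer occurs once to the left of the red wall, once one notes that the negative array is by definition the collection of the terms $\bar A_{m,n}$ with $n\geq1$, and that $0$ shows up only as the red-wall term $\bar A_{1,0}$ and so is excluded.

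The substance is Property~3, and I would model the argument on the proof of Theorem~\ref{thm:1}, systematically replacing $\mathtt{out}$ by $\mathtt{nut}$. Given a recurrent sequence $(B_n)$ with $B_{n+1}=-dB_n+B_{n-1}$, the characteristic polynomial $x^2+dx-1$ has roots $-\alpha$ and $-\beta$, so I write $B_n=A(-\alpha)^n+C(-\beta)^n$. A short computation exactly parallel to the one in Theorem~\ref{thm:1} (the $(-\alpha)^n$ part cancels, and $(-\beta)^n=\alpha^{-n}$) gives
\[
B_{n+1}+\alpha B_n=C(\alpha-\beta)\,\alpha^{-n}.
\]
Hence the quantity $B_{n+1}+\alpha B_n$ has constant sign, namely the sign of $C$, and its absolute value tends to $0$.

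Now I compare this with $\mathtt{nut}$. By Lemma~\ref{lem:3}, the value $\mathtt{nut}(N)=\lceil -N\alpha\rceil$ is the unique integer with $\mathtt{nut}(N)+\alpha N\in[0,1)$, and by Corollary~\ref{cor:3b} the successor in the negative array satisfies $\bar A_{m,n+1}+\alpha\bar A_{m,n}\in(0,1)$ for $n\geq1$, so $\mathtt{nut}(\bar A_{m,n})=\bar A_{m,n+1}$ there. After replacing $(B_n)$ by $(-B_n)$ if necessary, I may assume $C>0$; then for all sufficiently large $n$ the quantity $B_{n+1}+\alpha B_n$ lies in $(0,1)$, so $B_{n+1}=\mathtt{nut}(B_n)$. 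Fixing such an $N$ with $B_N$ a large nonzero integer, Property~2 produces $m$ and $k\geq1$ with $B_N=\bar A_{m,k}$, and applying $\mathtt{nut}$ gives $B_{N+1}=\mathtt{nut}(B_N)=\mathtt{nut}(\bar A_{m,k})=\bar A_{m,k+1}$. Two consecutive terms of $(B_n)$ and $(\bar A_{m,n})$ now coincide; since both satisfy the same invertible recurrence, the sequences agree up to the shift $N-k$, which is tail equivalence.

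The main obstacle is the sign bookkeeping, and it is genuinely unavoidable rather than cosmetic. The $\mathtt{out}$ interval $\left(\frac1\alpha-1,\frac1\alpha\right)$ straddles zero and so absorbs an error of either sign, which is why Theorem~\ref{thm:1} needs no sign discussion; but the $\mathtt{nut}$ interval $[0,1)$ is one-sided, so the identity $B_{n+1}=\mathtt{nut}(B_n)$ can hold only when $B_{n+1}+\alpha B_n\geq0$, i.e.\ when $C>0$. This is precisely why the statement must permit tail equivalence to $(B_n)$ \emph{or} $(-B_n)$, and it is the displayed computation of $B_{n+1}+\alpha B_n$ that decides which of the two holds. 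A minor loose end to tidy up is the degenerate case $C=0$: then $B_n=A(-\alpha)^n$ is eventually irrational, so a nonzero integer sequence automatically has $C\neq0$ and the dichotomy above always applies.
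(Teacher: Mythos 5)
Your proof is correct, but it takes a different route from the paper's. The paper gives no written proof at all: Theorem~\ref{thm:3} is prefaced by ``Our previous results imply,'' and the intended chain is Property~1 from the definition, Property~2 from the unnumbered lemma (each nonzero integer occurs exactly once left of the red wall), and --- crucially --- Property~3 by \emph{transfer} from the positive array: given $(B_n)$ with $B_{n+1}=-dB_n+B_{n-1}$, the sequence $(-1)^nB_n$ satisfies the positive recurrence and (after negating if necessary) is eventually positive, hence tail equivalent to a row $(A_{m,n})$ of the Ostrowski array by Theorem~\ref{thm:1}; Corollary~\ref{cor:4} then identifies that row, up to absolute values and a shift $i\in\{0,1\}$, with a row of the negative array, and the alternation of signs finishes the job. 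You never invoke Corollary~\ref{cor:4}; instead you re-run the proof of Theorem~\ref{thm:1} directly inside the negative array, with $\mathtt{nut}$ (Lemma~\ref{lem:3}, Corollary~\ref{cor:3b}) playing the role of $\mathtt{out}$. Both arguments are sound. The paper's route is shorter given what is already on the table, since Corollary~\ref{cor:4} has done the work of matching rows across the two arrays; your route is self-contained and has a real expository merit: it isolates exactly why the disjunction ``$(B_n)$ or $(-B_n)$'' in Property~3 is forced, namely that the $\mathtt{nut}$ window $[0,1)$ is one-sided and so captures only the sign class with $C>0$, whereas the $\mathtt{out}$ window $\left(\frac 1\alpha-1,\frac 1\alpha\right)$ straddles $0$ and requires no such choice. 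Your treatment of the loose ends is also adequate: zero is excluded from Property~2 because it sits on the red wall of the first row, and $C=0$ is impossible for a nonzero integer sequence; for completeness note that $A=0$ is impossible for the same reason, which is what guarantees the existence of a \emph{nonzero} (indeed large) $B_N$ at which to anchor the two consecutive matching terms.
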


The sequence of denominators $(D_n)$ is palindromic and so is the sequence 
$(E_n)$ given by $\cdots,d^2+2,-d,2,d,d^2+2,\cdots$. 
These are the so-called \emph{companion numbers}~\cite{bicknell}
which satisfy
\begin{equation}\label{eq:companion}
    E_n=\alpha^n+\beta*n.
\end{equation}
It is not hard to prove that
all palindromic sequences are multiples of $(D_n)$ or $(E_n)$, if
we allow multiples of the companion numbers to be halves if $d$ is even.
Following the Fifis and Lulus from~\cite{conwayryba},
let's call the multiples of $(D_n)$ Deedees and call the multiples of $(E_n)$
Edees. 

The Empire State Building is divided in blocks, counting from zero, 
where block $k$ consists
of all rows that are labelled by words of length 
$|w|=2k$ or $|w|=2k+1$. 
Fifis occur in the even blocks and Lulus occur in the odd blocks.
We modify this definition for
the Pell Tower and define \emph{block} $k$ to contain the
rows labelled by
words of length $|w|=2k-1$ or $|w|=2k$.
The initial word of block $k$ is $10^{2k-3}1$ (or $1$ if $k=1$)
and the final word is $(0d)^k$. Both rows are palindromes.
The distribution of Deedees and Edees over these blocks is not
as nice as for the Empire State Building, but we can still count
how many there are per block.

\begin{theorem}
The number of Deedees in block $k$ is equal to the number of times $k$
occurs in the sequence $\lfloor\log_{\alpha}(n)\rfloor + 1$. 
The number of
Edees in block $k$ is equal to the number of times $k$ occurs in
$\lfloor\log_{\alpha}(n(\alpha-\beta))\rfloor + 1$ where the
$n$ are halves if $d$ is even.
\end{theorem}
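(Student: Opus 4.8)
\section*{Proof proposal for the final theorem}

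The plan is to reduce the statement to locating each palindromic row in the tower and then counting. By the classification quoted just before the theorem, the palindromic rows are exactly the Deedees $cD_n$ (with $c$ a positive integer) and the Edees $cE_n$ (with $c$ a positive integer, or a positive half-integer when $d$ is even); by the Stolarsky property of Theorem~\ref{thm:1} each such sequence occupies exactly one row. Hence it suffices to show that the Deedee $cD_n$ lies in block $\lfloor\log_\alpha c\rfloor+1$ and the Edee $cE_n$ lies in block $\lfloor\log_\alpha(c(\alpha-\beta))\rfloor+1$; summing over the admissible $c$ then yields the two claimed counts. Recall that a row sits in block $k$ precisely when its width $|w|$ equals $2k-1$ or $2k$, i.e. $k=\lceil |w|/2\rceil$, and that $|w|$ is the number of terms between the red wall and the right wall. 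If the red wall sits at index $r$ and the right wall between indices $\rho$ and $\rho+1$, then $|w|=\rho-r+1$, and this difference is invariant under shifting the indexing, so I may compute it in the natural indexing in which the pillar sits at $n=0$.

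For a palindromic row write $X_n=a\alpha^n+b\beta^n$ and introduce
\[
T_n=X_{n-1}+\alpha X_n,\qquad U_n=X_{n+1}-\alpha X_n .
\]
A short computation using $\alpha\beta=-1$ and the identity $1+\alpha^2=\alpha(\alpha-\beta)$ (which follows from $\alpha^2=d\alpha+1$) gives $T_n=a(\alpha-\beta)\alpha^n$ and $U_n=-b(\alpha-\beta)\beta^n$. By Corollary~\ref{cor:3b} the red wall index $r$ is characterised by $T_{r-1}\in[\frac1\alpha,1)$. By Lemma~\ref{lem:1} two consecutive garden terms are $\mathtt{out}$-related exactly when the corresponding $U_n$ lies in $(\frac1\alpha-1,\frac1\alpha)$, and since $|U_n|$ diminishes geometrically by the factor $\frac1\alpha$ (Corollary~\ref{cor:3}), the right wall index $\rho$ is the largest $n$ for which $U_n\notin(\frac1\alpha-1,\frac1\alpha)$.

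For the Deedee $cD_n$ one has $a=c/(\alpha-\beta)$ and $b=-c/(\alpha-\beta)$, hence $T_n=c\alpha^n$ and $U_n=c\beta^n$. The red-wall condition $c\alpha^{r-1}\in[\frac1\alpha,1)$ gives at once $r=-\lfloor\log_\alpha c\rfloor$. Writing $L=\log_\alpha c$ and $\gamma=\log_\alpha(\alpha-1)\in(0,1)$, the even and odd terms of $U_n$ leave the interval precisely for $n\le L+1$ and for $n\le L+1-\gamma$ respectively, so $\rho$ is the larger of the largest such even and odd integers. A short case analysis on the parity of $\lfloor L\rfloor$ and on whether the fractional part of $L$ exceeds $\gamma$ shows in every case that $\rho\in\{\lfloor L\rfloor,\lfloor L\rfloor+1\}$. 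Therefore $|w|=\rho-r+1\in\{2\lfloor L\rfloor+1,\,2\lfloor L\rfloor+2\}$ and the block number is $\lceil|w|/2\rceil=\lfloor L\rfloor+1=\lfloor\log_\alpha c\rfloor+1$. The Edee case is identical after setting $a=b=c$: now $T_n=c(\alpha-\beta)\alpha^n$ and $U_n=-c(\alpha-\beta)\beta^n$, the parities of the two constraints are interchanged, and with $M=\log_\alpha(c(\alpha-\beta))$ the same analysis gives block number $\lfloor M\rfloor+1=\lfloor\log_\alpha(c(\alpha-\beta))\rfloor+1$.

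Summing, the number of Deedees in block $k$ equals the number of positive integers $c$ with $\lfloor\log_\alpha c\rfloor+1=k$, which is exactly the number of times $k$ occurs in $\lfloor\log_\alpha n\rfloor+1$; likewise for the Edees, with $c$ running over the admissible (half-)integers and the weight $\alpha-\beta$. I expect the only real work to be the parity case analysis delivering $\rho\in\{\lfloor\cdot\rfloor,\lfloor\cdot\rfloor+1\}$, together with the careful justification that the $\rho$ defined by $U_n$ is genuinely the right wall, i.e. that the garden consists of exactly the indices above $\rho$. One should also verify that the boundary values (such as $T_{r-1}=1$ or the interval endpoints) are never attained, which holds because $\alpha^{m}$ is never an integer nor an integer multiple of $\alpha-\beta$ for the relevant exponents; this is a routine check in $\mathbb Z[\alpha]$.
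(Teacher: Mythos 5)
Your proposal is correct, and it splits into one step that coincides with the paper's proof and one that is genuinely different. The first step is the same: you locate where the palindromic row enters the negative Ostrowski array via the window $[\frac1\alpha,1)$ of Corollary~\ref{cor:3b}; your computation $T_{r-1}=a(\alpha-\beta)\alpha^{r-1}$ is exactly the paper's identity $jD_{-i-1}+\alpha jD_{-i}=j/\alpha^{i}$ (resp.\ $jE_{-i-1}+\alpha jE_{-i}=j(\alpha-\beta)/\alpha^{i}$), and both pin down the same red-wall location, $i=\lfloor\log_\alpha j\rfloor+1$ in the paper's indexing versus $r=-\lfloor\log_\alpha j\rfloor$ in yours. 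The second step differs. The paper never computes the right wall: it invokes Corollary~\ref{cor:4} (the Ostrowski row equals the negative row in absolute value up to an offset in $\{0,1\}$) together with the palindromic symmetry $|D_{-n}|=D_n$, which immediately forces the width to be $2i-1$ or $2i$. You instead compute the right wall independently, characterizing it as the last index where $U_n=X_{n+1}-\alpha X_n$ leaves the interval $\left(\frac1\alpha-1,\frac1\alpha\right)$ of Lemma~\ref{lem:1}, and run a parity case analysis on $U_n=c\beta^n$ (resp.\ $-c(\alpha-\beta)\beta^n$). The pieces you defer do go through: $\rho$ really is the right wall because garden steps are $\mathtt{out}$-related, the step across the wall is not (the row label is trimmed), and once $U_n$ enters the interval it stays there (this uses $\alpha>2$, i.e.\ $d\geq2$); the parity analysis does yield $\rho\in\{\lfloor L\rfloor,\lfloor L\rfloor+1\}$; and the endpoint exclusions are routine in $\mathbb Z[\alpha]$. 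What each route buys: the paper's argument is shorter and reuses structure already established, while yours is self-contained in that it needs neither Corollary~\ref{cor:4} nor the palindromicity of the row beyond its Binet form, and it re-proves, for these rows, that the left wall sits at distance $0$ or $1$ from the red wall. A side benefit of your explicit Edee computation is that it confirms the constant $\alpha-\beta$ in the statement, whereas the paper's own proof of that half contains slips (it writes $\log_\alpha(j(1+\alpha))$, and its final line drops the factor altogether).
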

\begin{proof}
Consider $(jD_n)$ for some fixed $j$.
The first term of this sequence in the negative Ostrowski array
occurs at index $-i$ such that
$jD_{-i-1}+\alpha jD_{-i}<1\leq jD_{-i}+\alpha jD_{-i+1}$. 
We have $jD_{-i-1}+\alpha jD_{-i}=\frac j{\alpha^i}$
and so $i-1\leq\log_{\alpha}(j)<i$, or equivalently
$i=\lfloor\log_{\alpha}(j)\rfloor + 1$.
We determined the entry that is in the first column
of the negative array.
What is the entry in the first column of the Ostrowski array?
If the red wall and the left wall coincide, it is $jD_i$ and
if not then it is $jD_{i+1}$. The number of entries inside the
Pell tower is either $2i-1$ or $2i$. Hence $(jD_n)$ is in block $\lfloor \log_{\alpha}(j)\rfloor + 1$.

The computation for the Edees is identical.
The first term of $(jE_n)$ (the $j$ are halves if $d$ is even)
in the negative Ostrowski array
occurs at index $-i$ such that
\[
jE_{-i-1}+\alpha jE_{-i}<1\leq jE_{-i}+\alpha jE_{-i+1}.
\]
We have $jE_{-i-1}+\alpha jE_{-i}=\frac {j(\alpha-\beta)}{\alpha^i}$
and so $i=\lfloor\log_{\alpha}(j(1+\alpha))\rfloor + 1$.
We conclude that $(jE_n)$ is in block $\lfloor \log_{\alpha}(j)\rfloor + 1$.
\end{proof}

So where do we find $100D_n$ in Table~\ref{table:5}? It is in block $6=\lfloor \log_{\alpha}(100)\rfloor + 1$.
The first entry in the negative Ostrowski array is $100D_{-6}=-7000$. 
The first entry in the Ostrowski array is $16900$ which in Pell numeration is
given by $110101110101$.
This word represents $-7000$ in msd dual Pell numeration.
It is possible to compute the location of the palindromes in the table, but 
there does not seem to be a nice formula for these locations.
Conway and Ryba were able to find nice formulas for the palindromes
of the Empire State Building using Fibonometry, which we will consider
in the next section.

We conclude this section with some remarks on the case
of Fibonacci numbers. 
We do not supply proofs, as they are either very similar to
the proofs above or they are consequences of the results
of Conway and Ryba. 
Bunder proved that each integer can be written as $N=\sum_{1\leq j\leq i} d_jF_{-j}$
for digits $d_j\in\{0,1\}$ such that $d_{j+1}=0$ if $d_j=1$.
Bunder's algorithm Z to determine the expansion is not very complicated, but it 
involves a few different operations and the proof of its correctness requires a bit of work. 
There is a simpler algorithm!
The analogue of our Lemma~\ref{lem:3} holds for the negative base $-\gamma$,
where $\gamma=\frac{1+\sqrt 5}2$ is the golden ratio.
To determine the negative Zeckendorf representation, divide and round by $-\gamma$ and terminate at one.
We can also put a red wall 
within the Empire State Building. Since the array involves a seed, which is absent
for Ostrowski arrays for $d>1$, the distance between the red wall and the left wall
is either 1 or 2. The $n$-th block (counting blocks from zero)
is divided into two parts, starting with $F_{2n}$ rows
of distance 2 and ending with $F_{2n+1}$ rows of distance 1. 
The negative Wythoff array is a Stolarsky array.

\section{Ostronometry}

The Fibonacci numbers and the Lucas numbers satisfy many interesting 
identities. The  oldest seems to
be Cassini's identity from 1680, if not earlier: 
\[
F_{n+1}F_{n-1}-F_n^2=(-1)^n.
\]
This was only the start of an ever growing list.
Bicknell~\cite{bicknell} observed that these identities all
extend to our denominators $D_n$, and we retrace her footsteps. For instance, Cassini's identity extends to
\[
D_{n+1}D_{n-1}-D_n^2=(-1)^n.
\]
In these generalized identities the companion numbers $E_n$
take the role of the Lucas numbers. 

This conversion depends on a trick involving trigonometry,
which is why Conway and Ryba call it Fibonometry~\cite{fibo}.
The trick is apparently due to Vajda~\cite{Vajda} and extends to $D_n$ and $E_n$
and it seems natural to call this Ostronometry.
By definition
\[
\sin(nz)=\frac{e^{inz}-e^{-inz}}{2i}\ ,\ \ \ \cos(nz)=\frac{e^{inz}+e^{-inz}}2. 
\]
The recursive sequence
that starts with $2$ and $d$ produces the companion
numbers $(E_n)$. We write $\Delta=d^2+4$. Vajda's trick
puts $z=\frac {\pi} 2 - i\log(\alpha)$.
By Equations~\eqref{eq:binet} and~\eqref{eq:companion} we find
\begin{equation}\label{eq:trick}
\sin(nz) = \frac {D_n \sqrt{\Delta}}{2}\cdot i^{n-1} \ ,\ \ \ \cos(nz)=\frac{E_n}{2}\cdot i^n.  
\end{equation}
If $d=1$ then $D_n$ and $E_n$ are the Fibonacci and Lucas numbers. 
Now trigonometric equations transform into Cassini-like identities.
Fibonacci identities transform into Ostrowski identities under
$F_n\to D_n\cdot\sqrt{d^2+4}/\sqrt{5}$ and $L_n\to E_n$.
For instance, the fundamental equation $\cos^2(nz)+\sin^2(nz)=1$ becomes
\[
E_n^2-\Delta D_n^2=(-1)^n 4,
\]
and we see that $(D_n,E_n)$ solves a Pell equation. Cassini's identity
follows from
\begin{equation*}
\sin^2(x) -  \sin^2(y) = \sin(x + y) \sin (x - y),
\end{equation*}
if $x=nz$ and $y=z$.
The trigonometric Jacobi identity from~\cite{conwayryba}
is equal to
\[
\sin(a)\sin(b-c)+\sin(b)\sin(c-a)+\sin(c)\sin(a-b)=0.
\]
It should be read in terms of $az, bz, cz$ and transforms to
\[
(-1)^cD_aD_{b-c}+(-1)^aD_bD_{c-a}+(-1)^bD_cD_{a-b}=0.
\]
Cassini's identity is the special case in which $a,b,c$ is equal to $n+1,n,1$.
A generalization of Cassini's identity, which is sometimes named after d'Octagne, is
\[
F_mF_{n+1}-F_{m+1}F_n=(-1)^nF_{m-n}.
\]
It is another consequence of the trigonometric Jacobi equation. By Ostronometry we get
\begin{equation}\label{eq:3}
D_mD_{n+1}-D_{m+1}D_n=(-1)^nD_{m-n}.
\end{equation}

Ostronometry can also be used to demonstrate divisibility properties of
the denominators. It is well known that $F_d$ divides $F_n$ if
$d$ divides $n$. By Fibonometry, this follows from the
fact that by De Moivre $\sin(nz)$ is a sum of $\sin^j(z)\cos^{n-j}(z)$ for odd $j$. 
More can be said. If we take $m=n+1$ in Equation~\eqref{eq:3} then we find that
two consecutive denominators
are relatively prime. Therefore $\gcd(D_m,D_n)=\gcd(D_{m-n},D_n)$ and by a run
of the lazy Euclidean algorithm, which subtracts one $n$ at a time instead of
a multiple, we conclude that
$\gcd(D_m,D_n)=D_{\gcd(m,n)}$. In particular, $D_n$ divides $D_m$ if and only
if $n$ divides $m$, which is a well known fact for Fibonacci numbers.
Another fun fact is Carmichael's Theorem, which says that the product of any $n$ consecutive Fibonacci
numbers is divisible by $F_1\cdots F_n$. It is a consequence of d'Octagne's
identity~\cite[p 74]{Vajda} and hence extends to $D_n$.

Vajda's trick replaces $e^{iz}$ by a fundamental
solution of the Pell equation $|X^2-\Delta Y^2|=1$.
\[
\cos(z)+i \sin(z)\longleftrightarrow \frac {E_1 + D_1\sqrt {\Delta}}{2}\cdot i.
\]
The reason why this works is its stability under $n$-th powers
\[
\cos(nz)+i \sin(nz)\longleftrightarrow \frac {E_n + D_n\sqrt {\Delta}}{2}\cdot i^n.
\]
The left-hand side follows from De Moivre's identity. The right-hand side
follows from the fact that solutions of
the Pell equation form a cyclic group. 
Vajda uses the hyperbolic sine and cosine, which give cleaner formulas,
but their identities are less familiar.

It is possible to adapt Vajda's trick to other rows in the Ostrowski table, although
it gets a little cumbersome. Pick a row $m$ in the Ostrowski table
and write $Y_n=A_{m,n}$. The generalized Binet formula gives
\[
Y_n = \left(a+\frac{b}{\sqrt{\Delta}}\right)\alpha^n\ +\ \left(a-\frac{b}{\sqrt{\Delta}}\right)\beta^n  
\]
for $2a=Y_0$ and $ad+b=Y_1$. The numbers $Y_n$ need companions $X_n$ to solve a Pell
equation $X_n^2 - \Delta Y_n^2 = (-1)^{n}C$ for some constant $C$. Asymptotically, $X_n$ needs to be
equal to $\sqrt{\Delta}Y_n$ and so
\[
X_n = \left(a\sqrt{\Delta}+b\right)\alpha^n\ +\ \left(-a\sqrt{\Delta}+b\right)\beta^n.  
\]
We have that
\begin{equation}\label{eq:norm}
X_n+Y_n\sqrt{\Delta}=(X_0+Y_0\sqrt\Delta)\cdot\frac{E_n+D_n\sqrt{\Delta}}2.
\end{equation}
By taking the norm it follows that the pairs $(X_n,Y_n)$ solve the Pell equation above with $C=X_0^2-Y_0^2\Delta$.
Choose $\phi$ such that
\[
\sin(\phi) = \frac { Y_0 \sqrt{\Delta}}{i\sqrt{C}} \ ,\ \ \ \cos(\phi)=\frac{X_0}{\sqrt{C}}. 
\]
Vajda's trick in Equation~\eqref{eq:trick} combined with Equation~\eqref{eq:norm} gives
\[
\frac{X_n+Y_n\sqrt{\Delta}}{\sqrt C}=(\cos(\phi)+i\sin(\phi))(\cos(nz)+i\sin(nz))i^{-n}.
\]
According to Equation~\eqref{eq:norm} we have $Y_n=X_0D_n+Y_0E_n$, which corresponds to
the computation of the imaginary part of $(\cos(\phi)+i\sin(\phi))(\cos(nz)+i\sin(nz))$.
From this we find $Y_n$ and hence $X_n$.
Vajda's trick for Ostrowski rows is
\begin{equation}\label{eq:modi}
\sin(nz+\phi) = \frac {Y_n \sqrt{\Delta}}{\sqrt{C}}\cdot i^{n-1} \ ,\ \ \ \cos(nz+\phi)=\frac{X_n}{\sqrt{C}}\cdot i^n.  
\end{equation}
It is a little more cumbersome since it has an extra angle $\phi$ and a constant $C$. For instance, 
if we read $a$ as $az+\phi$, $b$ as $bz+\phi$ and $c$ as $z$, then the trigonometric Jacobi identity transforms to
\[
Y_bY_{a-1}-Y_aY_{b-1}=(-1)^bD_{a-b}\cdot \frac C4.
\]

\section{Concluding remarks}

Negative numbers were never considered to be satanic, this is a modern 
myth~\cite{nothaft}, but it is fair to say that they have not received as much attention as positive numbers.
We should treat all numbers, negative and positive, odd and even, with equal respect, regardless
of orientation or parity.
Our understanding of the combinatorial and dynamical properties of
negative beta-expansions has progressed tremendously
thanks to works of Charlier, Frougny, Ito, Pelantov\'a, Steiner, and many others. 
An overview of the literature with open problems is given in \cite[Ch 2]{steiner}.
Negative bases have recently been implemented in
the automatic theorem prover {\tt Walnut}, which already devoured its first
conjectures~\cite{walnut}. 
Labb{\'e} and Lep{\v{s}}ov{\'a} 
recently found an interesting new type of negative Zeckendorf numeration 
from Wang tiles~\cite{labbe}.

We restricted our attention to the recursion $X_{n+1}=dX_n+X_{n-1}$. How about other
recursions? A natural choice is the Tribonacci recursion
$X_{n+1}=X_{n}+X_{n-1}+X_{n-2}$ that was considered in~\cite{tribo}.
It turns
out that it is very difficult to find Conway-Ryba type of results for this recursion.
It would be nice if there is some sort of Tribonometry, but it may not exist.
However, it is possible to define the bi-infinite Ostrowski array for arbitrary $\alpha>1$
by using the dual Ostrowski numeration system. 
I am grateful to one of the referees for
pointing that out. 
Is its first column again a non-homogeneous Beatty sequence for all $\alpha>1$?
What can be said about the building inside a general Ostrowski array?

Overt nationalism has regained respectability once again, so
let me highlight the abundance of Dutch mathematicians  in this
paper. Maarten Bunder, Gerrit Lekkerkerker
(who preceded Edouard Zeckendorf), John Pell, and Willem Wythoff all studied or 
worked at the
University of Amsterdam, just like me.
The Empire State Building is located in the former New Amsterdam in what
was then the colonial province of New Netherland.
Edouard Zeckendorf grew up near Li\`ege, where he studied medicine, but his parents were 
from Amsterdam.
This city is infamous for various reasons, yet its uncanny
connection with recursion has so far gone unnoticed.

\subsection*{Acknowledgement}
I am grateful to the two referees who both made some very useful suggestions and expert remarks on earlier versions of this paper.

{\small
    
}

\EditInfo{September 12, 2023}{August 7, 2024}{Emilie Charlier, Julien Leroy and Michel Rigo}


\begin{thebibliography}{10}

\bibitem{AS}
J.-P. Allouche and J.~Shallit.
\newblock {\em Automatic {S}equences}.
\newblock Cambridge University Press, Cambridge, UK, 2003.

\bibitem{arnoux}
P.~Arnoux, S.~Ferenczi, and P.~Hubert.
\newblock Trajectories of rotations.
\newblock {\em Acta Arithm.}, 87(3):209--217, 1999.

\bibitem{bicknell}
M.~Bicknell.
\newblock A primer on the {P}ell sequence and related sequences.
\newblock {\em Fibonacci Quarterly}, 13(4):345--349, 1975.

\bibitem{bunder}
M.~Bunder.
\newblock Zeckendorf representations using negative {F}ibonacci numbers.
\newblock {\em Fibonacci Quarterly}, 30(2):111--115, 1992.

\bibitem{carton}
O.~Carton and R.~Yassawi.
\newblock Mahler equations for {Z}eckendorf numeration.
\newblock {\em arXiv:2405.01953}, 2024.

\bibitem{tribo}
E.~Chen, A.~Ge, A.~Kalashnikov, T.~Khovanova, E.~Kim, E.~Liang, M.~Lubashev,
  M.~Qian, R.~Raghavan, B.~Taycher, et~al.
\newblock Generalizing the {W}ythoff array and other {F}ibonacci facts to
  {T}ribonacci numbers.
\newblock {\em arXiv:2211.01410}, 2022.

\bibitem{fibo}
J.~Conway and A.~Ryba.
\newblock Fibonometry.
\newblock {\em Mathematical Gazette}, 97(540):494--495, 2013.

\bibitem{conwayryba}
J.~Conway and A.~Ryba.
\newblock The {E}xtra {F}ibonacci {S}eries and the {E}mpire {S}tate {B}uilding.
\newblock {\em Mathematical Intelligencer}, 38(1):41--48, 2016.

\bibitem{dajani}
K.~Dajani and C.~Kraaikamp.
\newblock From greedy to lazy expansions and their driving dynamics.
\newblock {\em Expositiones Mathematicae}, 20(4):315--327, 2002.

\bibitem{duchene}
E.~Duch{\^e}ne, A.~Fraenkel, V.~Gurvich, N.~Ho, C.~Kimberling, and U.~Larsson.
\newblock Wythoff visions.
\newblock {\em Games of no chance}, 5:35--87, 2019.

\bibitem{ostro}
C.~Epifanioa, C.~Frougny, A.~Gabriele, F.~Mignosi, and J.~Shallit.
\newblock Sturmian graphs and integer representations over numeration systems.
\newblock {\em Discrete Applied Mathematics}, 160:536--547, 2012.

\bibitem{fogg}
N.~P. Fogg, V.~Berth{\'e}, S.~Ferenczi, C.~Mauduit, and A.~Siegel.
\newblock {\em Substitutions in dynamics, arithmetics and combinatorics}.
\newblock Springer, 2002.

\bibitem{fraenkel}
A.~S. Fraenkel.
\newblock The bracket function and complementary sets of integers.
\newblock {\em Canadian Journal of Mathematics}, 21:6--27, 1969.

\bibitem{kenison}
G.~Kenison, J.~Nieuwveld, J.~Ouaknine, and J.~Worrell.
\newblock Positivity problems for reversible linear recurrence sequences.
\newblock In {\em 50th International Colloquium on Automata, Languages, and
  Programming (ICALP 2023)}. Schloss-Dagstuhl-Leibniz Zentrum f{\"u}r
  Informatik, 2023.

\bibitem{kimberling}
C.~Kimberling.
\newblock The {Z}eckendorf array equals the {W}ythoff array.
\newblock {\em Fibonacci Quarterly}, 33(1):3--8, 1995.

\bibitem{labbe}
S.~Labb{\'e} and J.~Lep{\v{s}}ov{\'a}.
\newblock A {F}ibonacci analogue of the two’s complement numeration system.
\newblock {\em RAIRO-Theor. Inf. Appl.}, 57:12, 2023.

\bibitem{morrison}
D.~R. Morrison.
\newblock A {S}tolarsky array of {W}ythoff pairs.
\newblock {\em In: A Collection of Manuscripts Related to the Fibonacci
  Sequence}, 38(1):134--136, Santa Clara, Calif. 1980.

\bibitem{nothaft}
C.~P.~E. Nothaft.
\newblock Medieval {E}urope's satanic ciphers: on the genesis of a modern myth.
\newblock {\em British Journal for the History of Mathematics}, 35(2):107--130,
  2020.

\bibitem{walnut}
J.~Shallit, S.~L. Shan, and K.~H. Yang.
\newblock Automatic sequences in negative bases and proofs of some conjectures
  of {S}hevelev.
\newblock {\em RAIRO-Theor. Inf. Appl.}, 57, 2023.
\newblock 4.

\bibitem{steiner}
W.~Steiner.
\newblock {\em Numeration systems: automata, combinatorics, dynamical systems,
  number theory}.
\newblock Universit\'e de Paris, 2021.

\bibitem{stolarsky}
K.~B. Stolarsky.
\newblock A set of {F}ibonacci sequences such that each number belongs to
  exactly one.
\newblock {\em Fibonacci Quarterly}, 15(3):224, 1977.

\bibitem{Vajda}
S.~Vajda.
\newblock {\em Fibonacci and {L}ucas numbers, and the golden section}.
\newblock Dover publications, Mineola, New York, 2007.

\end{thebibliography}
\end{document}